\numberwithin{equation}{section}
\newtheorem{theorem}{Theorem}[section]
\newtheorem{lemma}[theorem]{Lemma}
\newtheorem{proposition}[theorem]{Proposition}
\theoremstyle{remark}
\newtheorem{remark}[theorem]{Remark}
\providecommand{\customgenericname}{}
\newcommand{\newcustomtheorem}[2]{%
  \newenvironment{#1}[1]
  {%
   \renewcommand\customgenericname{#2}%
   \renewcommand\theinnercustomgeneric{##1}%
   \innercustomgeneric
  }
  {\endinnercustomgeneric}
}
\newtheoremstyle{TheoremNum}
        {\topsep}{\topsep}              
        {\itshape}                      
        {}                              
        {\bfseries}                     
        {.}                             
        { }                             
        {\thmname{#1}\thmnote{ \bfseries #3}}
    \theoremstyle{TheoremNum}
\newcommand{\Pic}{\mathrm{Pic}}
\newcommand{\End}{\mathrm{End}}
\newcommand{\Aa}{\mathcal{A}}
\newcommand{\Dd}{\mathcal{D}}
\newcommand{\Ee}{\mathcal{E}}
\newcommand{\Ff}{\mathcal{F}}
\newcommand{\Ll}{\mathcal{L}}
\newcommand{\Vv}{\mathcal{V}}
\newcommand{\Ww}{\mathcal{W}}
\renewcommand{\H}{\mathrm{H}}
\newcommand{\M}{\mathrm{M}}
\newcommand{\lc}{\left \lceil}
\newcommand{\rc}{\right \rceil}
\newcommand{\CC}{\mathbb{C}}
\newcommand{\PP}{\mathbb{P}}
\newcommand\Quotient[2]{
        \mathchoice
            {
                \text{\raise1ex\hbox{\thinspace $#1$}\Big{/} \lower1ex\hbox{$#2$} \thinspace}%
            }
            {
                #1\,/\,#2
            }
            {
                #1\,/\,#2
            }
            {
                #1\,/\,#2
            }
    }
\newcommand\GIT[2]{
        \mathchoice
            {
                \text{\raise1ex\hbox{\thinspace $#1$}\Big{/}\!\!\!\!\Big{/} \lower1ex\hbox{$#2$} \thinspace}%
            }
            {
                #1\,/\,#2
            }
            {
                #1\,/\,#2
            }
            {
                #1\,/\,#2
     a       }
    }
\title{The wobbly divisors of the moduli space of rank-$2$ vector bundles}
\author{Sarbeswar Pal}
\address{Sarbeswar Pal \\ Indian Institute of Science Education and Research \\ Thiruvananthapuram\\
Maruthamala PO\\ Vithura\\
Thiruvananthapuram - 695551\\  Kerala\\  India }
\email{sarbeswar11@gmail.com, spal@iisertvm.ac.in}
\author{Christian Pauly}
\address{Christian Pauly \\ Laboratoire de Math\'ematiques J.A. Dieudonn\'e \\ UMR  7351 CNRS \\ Universit\'e de Nice 
Sophia-Antipolis \\ 06108 Nice Cedex 02, France}
\email{pauly@unice.fr}
\date{\today}
\begin{document}

\maketitle

\begin{abstract}
Let $X$ be a smooth projective complex curve of genus $g \geq 2$ and let $\M_X(2,\Lambda)$
be the moduli space of semi-stable rank-$2$ vector bundles over $X$ with fixed determinant $\Lambda$. 
We show that the wobbly locus, i.e., the locus of semi-stable vector bundles admitting a non-zero
nilpotent Higgs field is a union of divisors $\Ww_k \subset \M_X(2,\Lambda)$. We show that on
one wobbly divisor the set of maximal subbundles is degenerate. We also compute the class of the
divisors $\Ww_k$ in the Picard group of $\M_X(2,\Lambda)$.
\end{abstract}

\section{Introduction}

Let $X$ be a smooth projective complex curve of genus $g \geq 2$ and let $K$ be its canonical line bundle. 
Fixing a line bundle $\Lambda$ we consider the coarse moduli space 
$\M_X(2, \Lambda)$ parameterizing semi-stable rank-$2$ vector bundles of fixed
determinant $\Lambda$ over $X$. In this note we study the locus in $\M_X(2, \Lambda)$ of wobbly, or non-very stable, vector bundles over
$X$. We recall that a vector bundle $E$ is called very stable if $E$ has no non-zero nilpotent Higgs field
$\phi \in \H^0(X, \End(E) \otimes K)$. Laumon \cite[Proposition 3.5]{L} proved, assuming $g \geq 2$, that a 
very stable vector bundle is stable and that the locus of very stable bundles is a non-empty open subset of 
$\M_X(2,\Lambda)$. Hence the locus of wobbly bundles is a closed subset 
$$\Ww \subset \M_X(2,\Lambda).$$
It was announced in Laumon \cite{L} Remarque 3.6 (ii) that $\Ww$ is of pure codimension $1$. 
The term ``wobbly" was introduced in the paper \cite{DP}.

\bigskip
Our first result proves this claim. Since the isomorphism class of the moduli space $\M_X(2, \Lambda)$ depends only
on the parity of the degree $\lambda = \deg \Lambda$, it will be enough to study two cases, $\lambda = 0$ and $\lambda =1$.
For $1 \leq k \leq g- \lambda$ we define $\Ww_k$ to be the 
closure in $\M_X(2, \Lambda)$ of the locus of all semi-stable vector bundles arising
as extensions
\[
0 \longrightarrow L \longrightarrow E \longrightarrow \Lambda L^{-1} \longrightarrow 0,
\]
with $\deg L = 1-k$ and $\dim H^0 (X, KL^2 \Lambda^{-1}) > 0$. We denote by $\lc x \rc$ the ceiling 
of the real number $x$. With this notation we have the following results.

\begin{theorem}
The wobbly locus $\Ww \subset \M_X(2, \Lambda)$ is of pure codimension $1$ and we have the following 
decomposition for $\lambda = 0$ and $\lambda = 1$
$$ \Ww = \Ww_{\lc \frac{g-\lambda}{2} \rc} \cup \ldots \cup \Ww_{g-\lambda}.$$
In particular, all loci $\Ww_k$ appearing in the above decomposition are divisors. They are all irreducible, except
$\Ww_g$ for $\lambda = 0$, which is the union of $2^{2g}$ irreducible divisors.
\end{theorem}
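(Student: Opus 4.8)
The plan is to translate the notion of a nilpotent Higgs field into the existence of a distinguished subbundle and then analyze the resulting parameter spaces. A nonzero nilpotent $\phi \in \H^0(X, \End(E)\otimes K)$ has a well-defined kernel line subbundle $L \subset E$ and image line subbundle $L' \subset E$, and nilpotency forces $L' \subset L$. Since $\phi$ factors through $E/L \cong \Lambda L^{-1}$, it gives a nonzero map $\Lambda L^{-1} \to L \otimes K$, i.e.\ a nonzero section of $KL^2\Lambda^{-1}$; conversely such a section together with the extension class recovers a nilpotent Higgs field. Writing $\deg L = 1-k$ and using $\deg(KL^2\Lambda^{-1}) = 2g-2 + 2(1-k) - \lambda = 2(g-k) - \lambda$, the condition $\H^0(KL^2\Lambda^{-1}) \neq 0$ is automatic once this degree is $\geq g$ (by Riemann--Roch/semicontinuity it can also hold for smaller degree, but then it is a closed condition), which pins down the range $1 \leq k \leq g-\lambda$ and, at the other end, shows $k \geq \lc (g-\lambda)/2 \rc$ is forced by semistability of $E$ (a destabilizing $L$ of too-negative degree cannot occur). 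So first I would carefully establish this dictionary between $\Ww$ and the union of the $\Ww_k$, including the precise degree bookkeeping that produces the stated index range.

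Next I would compute the dimension of each $\Ww_k$ to see that it is a divisor. Fix $L$ with $\deg L = 1-k$. The bundles $E$ in question are extensions of $\Lambda L^{-1}$ by $L$, parametrized by $\PP(\Ext^1(\Lambda L^{-1}, L)) = \PP\H^1(X, L^2\Lambda^{-1})$, which by Riemann--Roch has dimension roughly $(g-1) - \deg(L^2\Lambda^{-1}) - 1 + h^0 = 3g-3-2k-\lambda + h^0(KL^2\Lambda^{-1})$ hmm — I would instead count directly: $L$ moves in $\Pic^{1-k}(X)$, dimension $g$; the extension space $\PP\H^1(X,L^2\Lambda^{-1})$ has dimension $h^1(L^2\Lambda^{-1}) - 1$; then one subtracts the dimension of the fibers of the classifying map to $\M_X(2,\Lambda)$. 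Comparing with $\dim \M_X(2,\Lambda) = 3g-3$ should give codimension $1$ exactly when $\dim\H^0(KL^2\Lambda^{-1}) > 0$, i.e.\ along the divisorial condition defining $\Ww_k$; this is where the Brill--Noether-type count $h^0 > 0$ enters as a genuine divisor in $\Pic$. I expect the bookkeeping to work out uniformly for $\lc (g-\lambda)/2\rc \le k \le g-\lambda$, with the boundary cases (maximal $k$) needing the extra care described below.

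For irreducibility, I would argue that for each fixed $k$ in the range, the total space of triples $(L, s, E)$ with $s \in \PP\H^0(KL^2\Lambda^{-1})$ fibers over an irreducible base: the locus $W_k \subset \Pic^{1-k}(X)$ where $h^0(KL^2\Lambda^{-1}) > 0$ is the pullback of the theta divisor (or a translate/Brill--Noether locus) under $L \mapsto KL^2\Lambda^{-1}$, hence irreducible, with irreducible generic fibers, so $\Ww_k$ is irreducible. The sole exception is the extreme case: when $\lambda = 0$ and $k = g$, one has $\deg(KL^2\Lambda^{-1}) = 0$, so $h^0 > 0$ forces $KL^2\Lambda^{-1} \cong \Oo_X$, i.e.\ $L^2 \cong \Lambda K^{-1}$; this equation has exactly $2^{2g}$ solutions $L$ (a torsor under $2$-torsion $\Pic^0(X)[2]$), and each gives its own irreducible component, yielding the $2^{2g}$ irreducible divisors. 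I would check that for $\lambda = 1$, or for $\lambda = 0$ with $k < g$, the degree of $KL^2\Lambda^{-1}$ is strictly positive so $W_k$ is a genuine (irreducible) theta-type divisor and no such splitting occurs.

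The main obstacle I anticipate is the \emph{purity} and \emph{exhaustiveness} assertion: showing that \emph{every} wobbly bundle lies in some $\Ww_k$ with $k$ in the stated range (so that the union is all of $\Ww$ and nothing of codimension $\geq 2$ is missed), and that the closures do not introduce lower-dimensional pieces that spoil pure codimension $1$. Concretely, one must rule out nilpotent Higgs fields whose kernel line bundle $L$ has degree outside $1-k$ for $k \le g-\lambda$ — but semistability of $E$ bounds $\deg L \le \lambda/2$ hmm, $\le \lfloor \lambda/2\rfloor$, while the section condition $h^0(KL^2\Lambda^{-1})>0$ needs $\deg(KL^2\Lambda^{-1}) \ge 0$, i.e.\ $k \le g - \lambda/2$; reconciling these two bounds to get exactly $1 \le k \le g-\lambda$ and $k \ge \lc(g-\lambda)/2\rc$ requires the careful case analysis on the parity of $\lambda$ that I would carry out in detail. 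I would also need Laumon's result (quoted above) that very stable implies stable, so that $\Ww$ is closed and the complement is open and nonempty, legitimizing the codimension statement.
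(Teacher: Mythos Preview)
Your overall strategy---translate nilpotency into the existence of a line subbundle $L\subset E$ with $h^0(KL^2\Lambda^{-1})>0$, then do a parameter count over $(L,\text{extension class})$---matches the paper's, and your dimension count and the $2^{2g}$-component analysis for $\lambda=0,\ k=g$ are essentially correct.

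The genuine gap is in your derivation of the lower index bound $k\ge\lceil(g-\lambda)/2\rceil$. You assert twice that this bound is ``forced by semistability of $E$'', but that is false: semistability of $E$ only gives $\deg L\le \lambda/2$, i.e.\ $k\ge 1$, while the existence of a nonzero section of $KL^2\Lambda^{-1}$ only gives $k\le g-\lambda$. No amount of degree bookkeeping or ``case analysis on the parity of $\lambda$'' will refine this to $k\ge\lceil(g-\lambda)/2\rceil$. In fact for every $1\le k<k_0:=\lceil(g-\lambda)/2\rceil$ the locus $\Ww_k$ is nonempty: here $Z_k=\mathrm{Pic}^{1-k}(X)$, so every $L$ of degree $1-k$ works, and your own count gives $\dim\Ww_k\le 2g+2k+\lambda-4<3g-4$. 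Thus the naive decomposition $\Ww=\bigcup_{k=1}^{g-\lambda}\Ww_k$ has genuine low--codimension pieces for small $k$, and purity of codimension~$1$ together with the displayed decomposition require proving the \emph{containment} $\Ww_k\subset\Ww_{k_0}$ for all $k<k_0$. Your proposal does not supply an argument for this.

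In the paper this containment is the content of a separate proposition and is not elementary: one studies the rational classifying map $\psi_L:\mathbb{P}(H^1(L^2\Lambda^{-1}))\dashrightarrow \M_X(2,\Lambda)$ for $L\in Z_{k_0}$ and invokes Bertram's resolution of $\psi_L$ by a sequence of blow-ups along secant varieties of $X\hookrightarrow\mathbb{P}(H^1(L^2\Lambda^{-1}))$. The exceptional fibre over a point of the $d$-th secant variety is identified with $\widetilde{\mathbb{P}}_{L(D)}$ for an effective divisor $D$ of degree $d$, so its image lands in $\Ww_{k_0}$; combined with surjectivity of $(L,D)\mapsto L(D)$ from $Z_{k_0}\times S^d(X)$ onto $Z_{k_0-d}$, this yields $\Ww_{k_0-d}\subset\Ww_{k_0}$. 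This geometric input is the missing idea in your outline.

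A smaller point: your irreducibility argument asserts that the pullback of a Brill--Noether locus under $L\mapsto KL^2\Lambda^{-1}$ is irreducible. Since this map is (up to translation) multiplication by~$2$, the pullback of a divisor lies in $|4\Theta|$ and its irreducibility is not automatic. The paper sidesteps this by working with the incidence variety $\widetilde{Z}_k$, which is a $2^{2g}$-fold \'etale cover of the irreducible symmetric product $S^{2g-2k-\lambda}(X)$, and deducing irreducibility of $\Ee_k$ and hence of $\Ww_k$ from there.
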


This theorem completes the results obtained in \cite{P} showing that $\Ww$ is of codimension $1$ for $\lambda = 1$.
The idea of the proof is to consider the rational 
forgetful map (forgetting the non-zero Higgs field) from the equidimensional nilpotent cone in the moduli space
of semi-stable Higgs bundles to $\M_X(2, \Lambda)$. It turns out that roughly half of the irreducible components
of the nilpotent cone gets contracted by the forgetful map with one-dimesional fibers to the above mentioned
divisors $\Ww_k$, and the other half gets contracted with fibers of dimension $> 1$ to subvarieties of these disisors
$\Ww_k$.

\bigskip

Our second result studies the relationship between very stable vector bundles $E$ and the loci of maximal line 
subbundles of $E$. We recall here the main results on maximal line subbundles of rank-$2$ bundles (see e.g. 
\cite{O}, \cite{LN}). Under the assumption that $g+ \lambda$ is odd, i.e., $g$ odd if $\lambda = 0$ and
$g$ even if $\lambda=1$, the Quot-scheme
$$ M(E) := \mathrm{Quot}^{1, 1 - \lc \frac{g}{2} \rc}(E)  $$
parameterizing subsheaves of rank $1$ and degree $1 - \lc \frac{g}{2} \rc$ of $E$
is a zero-dimensional, reduced scheme of length $2^g$ for a {\em general} bundle
$E \in \M_X(2, \Lambda)$. In that case $M(E)$ parameterizes line subbundles of $E$ of maximal degree. We say that
$M(E)$ is non-degenerate if $\dim M(E) = 0$ and $M(E)$ is reduced, and degenerate if the opposite holds.

\begin{theorem}
Under the assumption that $g+ \lambda$ is odd, the following holds.
\begin{enumerate}
    \item If $E$ is very stable, then $M(E)$ is non-degenerate.
    \item The subscheme $M(E)$ is degenerate for any $E \in \Ww_{\lc \frac{g - \lambda}{2} \rc}$.
\end{enumerate}
\end{theorem}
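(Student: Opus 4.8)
The plan is to reduce both statements to a single identification of the Zariski tangent space of $M(E)$. Given a line subbundle $L\subset E$ of degree $d:=1-\lc \frac g2\rc$, the inclusion $L\hookrightarrow E$ defines a point of $M(E)=\mathrm{Quot}^{1,1-\lc g/2\rc}(E)$ whose Zariski tangent space is $\Hom(L,E/L)=\Hom(L,\Lambda L^{-1})=H^0(X,\Lambda L^{-2})$. Under the standing hypothesis that $g+\lambda$ is odd one checks $1-\lc\frac{g-\lambda}{2}\rc=d$ and $\deg(\Lambda L^{-2})=\deg(KL^2\Lambda^{-1})=g-1$, so that $\chi(\Lambda L^{-2})=0$ and Serre duality gives
\[ h^0(X,\Lambda L^{-2})=h^1(X,\Lambda L^{-2})=h^0(X,KL^2\Lambda^{-1}). \]
Hence $M(E)$ has a nonzero tangent vector at $[L]$ exactly when $H^0(X,KL^2\Lambda^{-1})\neq 0$, i.e. exactly when $0\to L\to E\to\Lambda L^{-1}\to 0$ is one of the extensions defining $\Ww_{\lc(g-\lambda)/2\rc}$. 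I will also use the remark already exploited in the proof of Theorem~1: a nonzero section of $KL^2\Lambda^{-1}$, viewed as a nonzero map $\Lambda L^{-1}\to L\otimes K$, yields via $E\twoheadrightarrow\Lambda L^{-1}\to L\otimes K\hookrightarrow E\otimes K$ a nonzero Higgs field $\phi$ with $\phi^2=0$, so that the existence of such an $L$ already forces $E$ to be wobbly.

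Granting this, part (2) is almost immediate for $E$ in the dense open part of $\Ww_{\lc(g-\lambda)/2\rc}$: such an $E$ is an extension of $\Lambda L^{-1}$ by a line subbundle $L$ of degree $d$ with $H^0(KL^2\Lambda^{-1})\neq 0$, so $[L]\in M(E)$ and $\dim T_{[L]}M(E)\geq 1$; thus $M(E)$ is positive-dimensional or non-reduced at $[L]$, hence degenerate. To upgrade this to \emph{every} $E$ in the closure $\Ww_{\lc(g-\lambda)/2\rc}$ I would invoke upper semicontinuity: over a parameter scheme $S$ carrying a (locally universal) family of semistable bundles dominating $\M_X(2,\Lambda)$, the relative Quot scheme $\mathrm{Quot}^{1,d}$ is projective over $S$, and the set of $s\in S$ for which the fibre is positive-dimensional or non-reduced is closed; it contains the preimage of the open part of the divisor, hence the preimage of its closure.

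For part (1) I argue by contraposition: if $M(E)$ is degenerate I will exhibit a nonzero nilpotent Higgs field. Degeneracy produces a point $[F\hookrightarrow E]\in M(E)$ with $\Hom(F,E/F)\neq 0$. If $F$ is not saturated in $E$, its saturation $\bar F$ is a line subbundle of degree $e>d$, whence $\deg(K\bar F^2\Lambda^{-1})=(g-1)+2(e-d)\geq g+1$ and $H^0(K\bar F^2\Lambda^{-1})\neq 0$ by Riemann--Roch, so the construction above applied to $\bar F$ does the job. If $F=L$ is already a subbundle, then $0\neq\Hom(L,E/L)=H^0(\Lambda L^{-2})$ forces $H^0(KL^2\Lambda^{-1})\neq 0$ by the displayed equality, and again one gets a nonzero nilpotent Higgs field; in either case $E$ is not very stable. (The edge case $M(E)=\emptyset$ does not occur, since by the classical bound on line subbundles of rank-$2$ bundles every stable $E$ has a line subbundle of degree $\geq d$, see \cite{LN}.) The two places where real care is needed are the non-saturated case in part (1) --- which is exactly where the sharp inequality $e>d$ and the Riemann--Roch threshold $g$ are used --- and the semicontinuity/closure step in part (2); everything else is formal once the Serre-duality identification above is in hand.
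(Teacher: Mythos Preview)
Your proof is correct and follows essentially the same approach as the paper's own argument: the same Serre-duality identification $h^0(\Lambda L^{-2})=h^0(KL^2\Lambda^{-1})$ at the critical degree, the same saturated/non-saturated case split for part~(1), and the same ``degeneracy is a closed condition'' step for part~(2). Your write-up is in fact slightly more careful than the paper's in two places --- you treat both parities at once rather than only $\lambda=1$, and you spell out the closure step via the relative Quot scheme where the paper simply asserts closedness --- but the underlying ideas are identical.
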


The case $g=2, \lambda =1$ was already worked out in \cite{P}. In Remark 3.1 we show that part (2) does not hold on other components
of the wobbly locus.

\bigskip

Our last result computes the class $cl(\Ww_k)$ of the wobbly divisors $\Ww_k$ in the Picard group of the 
moduli space $\M_X(2, \Lambda)$, which is isomorphic to $\mathbb{Z}$ (see e.g. \cite{DN}).

\begin{theorem}
We have the following equality for $\lambda = 0$ and $\lambda = 1$
$$ cl(\Ww_k) = 2^{2k} \binom{g}{2g-2k - \lambda} \ \text{for} \  \lc \frac{g- \lambda}{2} \rc \leq k \leq g- \lambda.$$
\end{theorem}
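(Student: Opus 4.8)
The Picard group of $\M_X(2,\Lambda)$ is infinite cyclic, generated by the ample theta (or determinant) line bundle $\Theta$; so it suffices to produce one smooth projective curve $C\to\M_X(2,\Lambda)$ for which both $\Theta\cdot C$ and $\Ww_k\cdot C$ are computable, and then $cl(\Ww_k)=(\Ww_k\cdot C)/(\Theta\cdot C)$. Write $d=2g-2k-\lambda=\deg(KL^2\Lambda^{-1})$, so that $\lc\tfrac{g-\lambda}{2}\rc\le k\le g-\lambda$ becomes $\lambda\le d\le g$. Two structural facts drive the computation. First, as explained in the introduction, $\Ww_k$ is the image under the forgetful map of the projective bundle of extensions $0\to L\to E\to\Lambda L^{-1}\to 0$ over the locus $Z_k=\{L\in\Pic^{1-k}(X):h^0(KL^2\Lambda^{-1})>0\}$, and $Z_k$ is the preimage of the Abel--Jacobi locus $W_d\subset\Pic^d(X)$ under the isogeny $L\mapsto KL^2\Lambda^{-1}$, which is \'etale of degree $2^{2g}$. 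The squaring built into this isogeny, together with Poincar\'e's formula $[W_d]=\theta^{\,g-d}/(g-d)!$, is what ultimately produces the two factors $2^{2k}$ and $\binom{g}{d}$. Second, for a general line bundle $L'$ of sufficiently negative degree, a general line $\ell$ in $\PP\,\Ext^1(\Lambda L'^{-1},L')$ maps to a curve $C$ in $\M_X(2,\Lambda)$ that does not lie in $\Ww_k$ and whose intersection number $\Theta\cdot C$ is an explicit small integer (equal to $|\deg L'|$ when $\lambda=0$): this follows from the standard determinant-of-cohomology description of $\Theta$, which restricts on such an extension family to an explicit power of the hyperplane class. In the next two paragraphs I treat $\lambda=0$; the odd case is discussed at the end.

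To compute $\Ww_k\cdot C$, note that for $e\in\ell$ the bundle $E_e$ lies in $\Ww_k$ exactly when it carries a line subbundle $M$ with $\deg M=1-k$ and $h^0(KM^2\Lambda^{-1})>0$. Since $\deg L'$ is very negative, $M$ cannot be contained in $L'$, so $M\hookrightarrow E_e\to\Lambda L'^{-1}$ is nonzero; its image is $\Lambda L'^{-1}\otimes\Oo(-D)$ for an effective divisor $D$ of degree $\deg(\Lambda L'^{-1})-(1-k)$, and $M$ is the kernel of the corresponding surjection $E_e\twoheadrightarrow\Lambda L'^{-1}\otimes\Oo(-D)$. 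Such a surjection, lifting the inclusion $\Lambda L'^{-1}(-D)\hookrightarrow\Lambda L'^{-1}$, exists iff $e$ lies in the linear subspace $\Pi_D=\PP\bigl(\ker(H^1(L'^2\Lambda^{-1})\to H^1(L'^2\Lambda^{-1}(D)))\bigr)$, and the wobbliness condition $h^0(KM^2\Lambda^{-1})>0$ becomes the Brill--Noether condition $h^0(KL'^{-2}\Lambda(-2D))>0$, which restricts $D$ to a subvariety $B\subset X^{(\deg D)}$. A dimension count gives $\dim\Pi_D+\dim B=\dim\PP\,\Ext^1(\Lambda L'^{-1},L')-1$, so $\bigcup_{D\in B}\Pi_D$ is a hypersurface and $\Ww_k\cdot C$ is the number of $D\in B$ for which $\Pi_D$ meets the general line $\ell$.

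This number is a Schubert count for the family of linear subspaces $\{\Pi_D\}_{D\in B}$: writing $\Kk$ for the tautological vector bundle on $X^{(\deg D)}$ with fibre $\ker\bigl(H^1(L'^2\Lambda^{-1})\to H^1(L'^2\Lambda^{-1}(D))\bigr)$ over $D$, the count equals $\int_B s(\Kk)$ in the appropriate degree. The Chern classes of $\Kk$ are classical tautological classes of the symmetric product — $\Kk$ sits in a short exact sequence of trivial bundles and $\Oo(1)$-twists coming from $Rp_*$ of a line bundle on the universal divisor, the twist $L'^{-2}$ contributing the relevant powers of $2$ — while $[B]$ equals $4^{\,g-d}$ times the pullback of $\theta^{\,g-d}/(g-d)!$, the factor $4^{\,g-d}$ reflecting $[2]^*\theta=4\theta$. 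Combining these, $\Ww_k\cdot C=(\Theta\cdot C)\cdot 2^{2k}\binom{g}{d}$, and dividing gives $cl(\Ww_k)=2^{2k}\binom{g}{2g-2k-\lambda}$. As a check, for $k=g$, $\lambda=0$ (so $d=0$) the locus $B$ is a disjoint union of $2^{2g}$ copies of $\PP^{g-1}$, the Segre integral over each contributes $\binom{g}{1}=g$, the curve has $\Theta\cdot C=g$, and the ratio is $2^{2g}=2^{2g}\binom{g}{0}$, consistent with the description of $\Ww_g$ in Theorem 1.1 as a union of $2^{2g}$ divisors of class $1$.

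The main obstacle is to make this count genuinely enumerative for every $k$ in the range $\lc\tfrac{g-\lambda}{2}\rc\le k\le g-\lambda$: one must verify the transversality of the spaces $\Pi_D$ against a general line and the reducedness of $\bigcup_{D\in B}\Pi_D$, discard the contributions of non-saturated subsheaves and check that no other subbundle of $E_e$ enters, and handle the degenerate values $d=g$ (where $Z_k=\Pic^{1-k}(X)$) and $d=\lambda$ (where $Z_k$ is finite and, for $\lambda=0$, $\Ww_k$ splits into $2^{2g}$ components) by hand. Beyond that, the remaining work is the lengthy but essentially mechanical tautological-class and Poincar\'e-formula bookkeeping, carried out separately for $\lambda=0$ and $\lambda=1$: for $\lambda$ odd the reference sheaf defining $\Theta$ has rank two, the dimension of the extension family shifts by one, and the cleanest test curve is instead a Hecke curve (again of $\Theta$-degree one), for which a parallel analysis of the elementary transformations landing in $\Ww_k$ should yield the same formula. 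I would finally cross-check against the case $g=2$, $\lambda=1$ of \cite{P}.
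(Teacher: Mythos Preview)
Your test-curve strategy is genuinely different from the paper's. The paper never chooses a particular curve in $\M_X(2,\Lambda)$; instead it works with an arbitrary smooth family $\Ee$ over $S\times X$ (with $S$ of unspecified dimension) and computes the class of $f^{-1}(\Ww_k)$ in $H^2(S)$ directly, modulo $H^{\ge 3}(S)$. The determinantal locus $\Delta_k=\{(s,L):\Hom(\Ee_s,L^{-1}\Lambda)\ne 0\}\subset S\times\Pic^{1-k}(X)$ is resolved by passing to $\PP(\Ff)$ with $\Ff=(\pi_{S\times P})_*Hom(\Ee,\Ll^{-1}\Lambda(D_0))$, and Lemma~\ref{fiberqzerodim} shows this resolution is generically one-to-one, so the class $[\Delta_k]$ is a Segre class of $\Ff$. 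A GRR computation gives $c_t(\Ff)=\exp((-\tfrac12\Theta_S-2\Theta_P)t)$ modulo $H^{\ge 3}(S)$, whence the relevant Segre coefficient is $(\tfrac12\Theta_S+2\Theta_P)^{e+1}/(e+1)!$ with $e=2g-2k-\lambda$; intersecting with $[Z_k]=2^{2(g-e)}\Theta_P^{g-e}/(g-e)!$ (your ``squaring'' factor, their Lemma~4.1) and pushing to $S$ gives $2^{2k}\binom{g}{e}\Theta_S$.

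Both approaches rest on the same two structural ingredients you identified --- Poincar\'e's formula and the isogeny $L\mapsto KL^2\Lambda^{-1}$ --- and in fact your bundle $\Kk$ is essentially the restriction to a line of their $\Ff$. The advantage of the paper's universal-$S$ formulation is precisely that it bypasses everything you flagged as ``the main obstacle'': there is no transversality to check, no saturated/non-saturated dichotomy, and no need to treat $\lambda=1$ via a separate Hecke construction, because one computes a cohomology class rather than an honest count. The only geometric input is Lemma~\ref{fiberqzerodim} ensuring the resolution $\tilde\Delta_k\to\Delta_k$ is birational over $S\times Z_k$. Your route is more hands-on and would give finer information (an explicit degeneracy locus on a concrete curve), but as written it is a programme rather than a proof: the transversality for all $k$ in the range, the exclusion of non-saturated contributions, and the entire $\lambda=1$ computation are promised rather than carried out. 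Incidentally, your sanity check at $k=g$, $\lambda=0$ has the right final ratio $2^{2g}$, but the intermediate claims ($\Theta\cdot C=g$, each Segre integral equal to $\binom{g}{1}$) do not match the setup with $\deg L'$ arbitrary and very negative; this suggests the bookkeeping still needs tightening before the general case will go through.
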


In the case $\lambda = 0$ the computations of the class $cl(\Ww_k)$ were already carried out in \cite{F}, but due to some
typos the final result in loc.cit. is not correct. For the convenience of the reader we include a detailed 
presentation of the computations in the case $\lambda=1$. 

\bigskip

In the last section we give a description of these divisors for low genus.

\bigskip

We would like to thank Jochen Heinloth and Ana Pe\'on-Nieto for useful discussions on the nilpotent cone. 
The first author thanks the University of Nice Sophia-Antipolis for financial support of a visit in November 2017, when 
most of this work was carried out.

\section{Proof of Theorem 1.1}

Let $\text{Higgs}_X(2, \Lambda)$ be the moduli space of semi-stable Higgs bundles of rank $2$ with fixed determinant $\Lambda$. 
The Hitchin map defined by mapping a Higgs field $(E, \phi)$ to its determinant $\det(\phi)$ is a proper surjective map 
\[
h: \text{Higgs}_X(2, \Lambda) \to H^0(X, K^2).
\] 
It is easy to see that the nilpotent cone decomposes as 
$$h^{-1}(0) = M_X(2, \Lambda) \cup \tilde{\Ee},$$
where $\M_X(2, \Lambda)$ denotes here  pairs $(E, 0)$ with zero Higgs field and $\tilde{\Ee}$ consists of semi-stable pairs $(E, \phi)$ with non-zero nilpotent 
Higgs field --- note that the underlying bundle $E$ is not necessarily semi-stable. 
In other words, the image of $\tilde{\Ee}$ under the rational forgetful map $h^{-1}(0) \dashrightarrow M_X(2, \Lambda)$ is the locus of wobbly
bundles. In the case $\lambda = 1$  the nilpotent cone was already described in \cite{TD}. For the convenience of the
reader we recall now the description.

\bigskip
By \cite{P} Lemma 3.1 a vector bundle $E$ admits a non-zero nilpotent Higgs field 
if and only if it contains a line subbundle $L$ with $H^0(X, KL^2\Lambda^{-1}) \ne 0$.
Thus any such bundle can be written as an extension 
\begin{equation} \label{extension}
   0 \to L \xrightarrow{i} E \xrightarrow{\pi} L^{-1}\Lambda \to 0,  
\end{equation}

 where $\Lambda$ is a line bundle of degree $\lambda \in \{0, 1\}$ and the nilpotent Higgs field is given as the composition 
 $\phi = i \circ u \circ \pi$ with a non-zero $u \in H^0(X, KL^2\Lambda^{-1}) = \mathrm{Hom}(L^{-1} \Lambda, LK)$.
 Then we have the following inequalities : \\

 $\bullet$
 Since   $\phi(L) = 0, L$ is invariant under $\phi$ and by semi-stability of the pair $(E, \phi)$, we have 
 $\text{deg}(L) = d \le \frac{\lambda}{2}$. \\
 
 $\bullet$
 We also have $u \ne 0$. This implies that $\deg(KL^2\Lambda^{-1}) \ge 0 \Leftrightarrow d \ge \frac{\lambda}{2} + 1 -g.$\\

 Hence we obtain the inequalities 
 $$\frac{\lambda}{2} +1 -g \le d \le \frac{\lambda}{2}.$$
 We set $k= 1-d$ and we distinguish two cases: \\
 
 $\bullet$  $\lambda = 0: 1-g \le d \le 0 \Leftrightarrow 1 \le k \le g$.\\
 
 $\bullet$  $\lambda =1: \frac{1}{2} +1 -g \le d \le \frac{1}{2} \Leftrightarrow 1 \le k \le g-1.$\\
 
 We introduce the subloci 
 $$
 \begin{array}{rcl}
 \Ww^0_k & := & \{ E \in \Ww: E \  \text{contains a line subbundle} \  L \text{ of degree 1-k} \\
       &    & \text{with } H^0(X, KL^2\Lambda^{-1}) \ne 0 \}
 \end{array}
 $$
 and denote by $\Ww_k$ the Zariski closure of $\Ww_k^0$ in $\M_X(2, \Lambda)$.
 We therefore deduce from the above considerations the following decompositions
 $\Ww = \bigcup_{k=1}^g \Ww_k$ for $\lambda = 0$ and $\Ww = \bigcup_{k=1}^{g-1} \Ww_k$ for $\lambda = 1$. \\
 
 \begin{remark} \label{Kummervariety}
 We observe that for $\lambda = 0$ the locus $\Ww_1$ coincides with the semi-stable boundary of $\M_X(2, \Lambda)$, which equals the
 Kummer variety of $X$.
 \end{remark}

 Now we decompose $\tilde{\Ee}$ as $\bigcup_{k=1}^g \Ee_k$ for $\lambda =0$ and $\bigcup_{k=1}^{g-1} \Ee_k$ for $\lambda = 1$ 
 such that the image of $\Ee_k$ under the forgetful map is $\Ww_k$.
 The construction goes as follows (we omit the construction of $\Ee_1$ for $\lambda = 0$ --- see Remark \ref{Kummervariety}) :\\
 We introduce the subvarieties $Z_k \subset \text{Pic}^{1-k}(X)$ for $1 \le k \le g-\lambda$ defined by 
 \[
 Z_k:= \{L \in \text{Pic}^{1-k}(X) \text{ such that } h^0(X, KL^2\Lambda^{-1}) \ne 0 \}.
 \]
 Then one can construct $Z_k$ as the pre-image of the Brill-Noether locus $W_{2g-2k-\lambda}(X)$ under the map 
 \[
 \mu_k: \text{Pic}^{1-k}(X) \to \text{Pic}^{2g-2k-\lambda}(X)
 \]
 taking $L$ to $KL^2\Lambda^{-1}.$ Then 
 $$
     \begin{array} {rcll}
     \text{dim} \  Z_k & = & 2g-2k-\lambda & \text{ if } 2g-2k-\lambda \le g \\
                       & = &  g  & \text{ if } 2g-2k-\lambda \ge g.
\end{array}
$$
Note that in the latter case $Z_k = \text{Pic}^{1-k}(X)$. Consider the fiber product $\tilde{Z}_k
= Z_k \times_{W_{2g-2k-\lambda}} S^{2g-2k-\lambda}(X)$
 \begin{equation}\label{E1}
 \xymatrix{
 \tilde{Z}_k \ar[r] \ar[d]^q  & S^{2g-2k-\lambda}(X) \ar[d]\\
 Z_k \ar[r]^{\mu_k} &  W_{2g-2k-\lambda}
 } 
 \end{equation}
 where the right vertical map is the natural map from the symmetric product of the curve to its Picard variety. Then the projection map 
 $p: \tilde{Z}_k \to S^{2g-2k-\lambda}(X)$ is a $2^{2g}$-fold \'etale covering of $S^{2g -2k-\lambda}(X)$ and $\tilde{Z}_k$ 
 parameterizes line bundles $L$ and effective divisors in $|KL^2\Lambda^{-1}|$.
 There exists a unique line bundle $\mathcal{L}$ over $S^{2g-2k-\lambda}(X)$ whose fiber at a divisor $D$
 is canonically isomorphic to the space of sections of the line bundle determined
by $D$ which vanish precisely on $D$. Furthermore, it can be shown that this line bundle is trivial.

\bigskip
Excluding the case $\lambda = 0$ and $k=1$ (see Remark \ref{Kummervariety}), we observe that the dimension of $\mathrm{Ext}^1(\Lambda L^{-1}, L) = H^1(\Lambda^{-1} L^2)$
depends only on the degree of the line bundle $L$. Therefore there exists a vector bundle
$\Vv_k$ over $Z_k$ whose fiber at a point $L \in Z_k$  is canonically isomorphic to
$\mathrm{Ext}^1(\Lambda L^{-1}, L)$. The rank of the vector bundle $\Vv_k$ is $g +2k +\lambda -3$ and a 
general extension class $v$ in the fiber $(\Vv_k)_L$ defines a 
stable rank-$2$ vector bundle $E_v$.

\begin{proposition} \label{nilpcone}
We have the following :
\begin{enumerate}
    \item The total space of the vector bundle $q^* \Vv_k \oplus p^* \mathcal{L}$ over $\tilde{Z}_k$ parameterizes triples
    $(L,v,u)$, where $L$ is a line bundle in $Z_k$, $v$ is an extension class in the fiber $(\Vv_k)_L$ and $u$ is a global
    section of $KL^2 \Lambda^{-1}$.
    \item There exists a rational map $\phi_k$ from the projectivized bundle $\PP(q^* \Vv_k \oplus p^* \mathcal{L})$ to $\mathrm{Higgs}_X(2, \Lambda)$
    defined by sending $(L,v,u)$ to the Higgs bundle $(E_v, i \circ u \circ \pi)$ as defined by the exact sequence (\ref{extension}).
    \item We have a commutative diagram
    \begin{equation}
 \xymatrix{
 \PP(q^* \Vv_k \oplus p^* \mathcal{L}) \ar[r]^{\phi_k} \ar[d]  & \mathrm{Higgs}_X(2,\Lambda) \ar[d]\\
 \PP( \Vv_k ) \ar[r]^{\psi_k} &  \M_X(2, \Lambda),
 } 
 \end{equation}
where all arrows are rational maps. The vertical maps are forgetful maps of the global section of $KL^2 \Lambda^{-1}$ and of
the Higgs field respectively.
   \item The restriction of $\phi_k$ to the vector bundle $q^* \Vv_k \subset \PP(q^* \Vv_k \oplus p^* \mathcal{L})$ is an injective morphism.
\end{enumerate}
\end{proposition}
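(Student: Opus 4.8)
This proposition assembles the construction into a single family over $\tilde Z_k$, so most of it is bookkeeping built from a Poincar\'e bundle on $Z_k\times X$, the universal extension over the total space of $\Vv_k$, the universal effective divisor on $S^{2g-2k-\lambda}(X)\times X$, and the triviality of $\mathcal{L}$. For part (1): a point of the total space of $q^*\Vv_k\oplus p^*\mathcal{L}$ is a point $z=(L,D)\in\tilde Z_k$ — so $L\in Z_k$, $D\in S^{2g-2k-\lambda}(X)$, $\mathcal{O}_X(D)\cong KL^2\Lambda^{-1}$ — together with $v\in(\Vv_k)_L=\Ext^1(\Lambda L^{-1},L)$ and $s\in\mathcal{L}_D$; since $\mathcal{L}_D$ is by construction the line of sections of $\mathcal{O}_X(D)\cong KL^2\Lambda^{-1}$ whose divisor is exactly $D$, the pair $(D,s)$ with $s\neq 0$ is the same datum as a nonzero $u\in H^0(X,KL^2\Lambda^{-1})$ (with $D=\mathrm{div}(u)$), which is the asserted parametrisation, bijective over the locus $u\neq 0$. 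For part (2): over the total space of $q^*\Vv_k$ one takes the universal extension $0\to\mathcal{P}_L\to\mathcal{E}\to\mathcal{P}_{\Lambda L^{-1}}\to 0$ of families on $X$; pulled back to the total space of $q^*\Vv_k\oplus p^*\mathcal{L}$, the $p^*\mathcal{L}$-coordinate together with the universal divisor supplies — this is where the triviality of $\mathcal{L}$ is used — a universal homomorphism $\mathcal{U}\colon\mathcal{P}_{\Lambda L^{-1}}\to\mathcal{P}_L\otimes K$, and $\Phi=(\text{universal }i)\circ\mathcal{U}\circ(\text{universal }\pi)$ makes $(\mathcal{E},\Phi)$ a family of Higgs bundles with $\Phi^2=\mathrm{tr}\,\Phi=0$ and determinant $\Lambda$, i.e.\ in the nilpotent cone. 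Over the dense open where $\mathcal{E}_x$ is stable this is a family of stable Higgs bundles, so the coarse moduli property yields a morphism to $\Higgs_X(2,\Lambda)$; a short computation shows the assignment is invariant under $(v,u)\mapsto(tv,tu)$ — rescaling $v$ by $t$ produces an isomorphic bundle under which the Higgs field is divided by $t$, while rescaling $u$ by $t$ multiplies it by $t$ — so it descends to the rational map $\phi_k$ on $\PP(q^*\Vv_k\oplus p^*\mathcal{L})$ determined by (\ref{extension}). Part (3) is then formal: $\psi_k$ is the analogous rational map attached to the universal extension over $\PP(\Vv_k)$, the left vertical arrow is induced by $q$ and by dropping the $p^*\mathcal{L}$-coordinate, the right vertical arrow is $(E,\phi)\mapsto E$, and both ways round the square send a point to the $S$-equivalence class of $E_v$.

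For part (4), the restriction of $\phi_k$ to $q^*\Vv_k$ is its restriction to the complement of the divisor $\PP(q^*\Vv_k)$, an affine bundle over $\tilde Z_k$ that the trivialisation of $\mathcal{L}$ identifies with $q^*\Vv_k$ (a different trivialisation changes the identification only by an automorphism of $q^*\Vv_k$, so does not affect the statement). On this locus the section coordinate is a \emph{nowhere-vanishing} $u=u_D$ with $\mathrm{div}(u_D)=D$, so $\phi=i_v\circ u_D\circ\pi_v$ is never zero; I claim $(E_v,\phi)$ is then stable as a Higgs bundle for every $L\in Z_k$ and $v\in(\Vv_k)_L$ (recall the case $\lambda=0$, $k=1$ is excluded). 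Indeed $\ker\phi=\ker\pi_v=i_v(L)$ is a line subbundle of $E_v$, and $\mathrm{im}\,\phi\subset(\ker\phi)\otimes K$ because $\phi^2=0$; so for any $\phi$-invariant line subbundle $M\subset E_v$ one gets $\phi(M)\subset(M\cap i_v(L))\otimes K$, which forces $M=i_v(L)$ or $M\cap i_v(L)=0$, and in the latter case $\phi(M)=0$, hence $M\subset\ker\phi=i_v(L)$, a contradiction. Thus the only $\phi$-invariant line subbundle is $i_v(L)$, of degree $1-k$, which is strictly less than $\lambda/2=\tfrac12\deg E_v$; so $(E_v,\phi)$ is stable. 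Restricting the family $(\mathcal{E},\Phi)$ above to $q^*\Vv_k$ therefore lands in the stable locus and gives a morphism $\phi_k|_{q^*\Vv_k}$.

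It remains to prove injectivity. From $(E,\phi)=\phi_k(L,D,v)$ one reconstructs the triple: $L\cong\ker\phi$ as a line subbundle of $E$, hence as a point of $Z_k$; then $\Lambda L^{-1}=E/\ker\phi$, with $i_v$ and $\pi_v$ determined up to scalar; then $D$ as the divisor along which $\mathrm{im}\,\phi\hookrightarrow(\ker\phi)\otimes K$ fails to be saturated, hence $u_D$, since it depends only on $D$. Suppose now $\rho\colon E_v\xrightarrow{\sim}E_{v'}$ is an isomorphism of the Higgs bundles $\phi_k(L,D,v)$ and $\phi_k(L',D',v')$: the reconstruction gives $L=L'$ and $D=D'$, hence $u_D=u_{D'}$, and $\rho$ induces automorphisms of $L$ and of $\Lambda L^{-1}$, i.e.\ scalars $\alpha,\beta\in\mathbb{C}^*$ with $\rho\circ i_v=\alpha\,i_{v'}$ and $\pi_{v'}\circ\rho=\beta\,\pi_v$. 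Substituting into the intertwining relation $\rho\circ\phi=\phi'\circ\rho$ and cancelling the nonzero homomorphism $i_{v'}\circ u_D\circ\pi_v$ gives $\alpha=\beta$; so $\rho$ is a morphism of the two extensions acting by the \emph{same} scalar on sub and quotient, and functoriality of $\Ext^1(\Lambda L^{-1},L)$ gives $\alpha v=\beta v'$, whence $v=v'$ since $\alpha=\beta\neq 0$. This shows $\phi_k|_{q^*\Vv_k}$ is injective on points; injectivity of the differential (hence that it is an immersion onto a locally closed subvariety) is a routine tangent-space computation, and set-theoretic injectivity is in any case what the later dimension estimates require.

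The delicate point is this last one — upgrading ``the projective class of the extension is recovered'' to ``the extension class itself is recovered''. The mechanism, namely that an isomorphism of the resulting Higgs bundles is forced to act by a \emph{single} scalar on $L$ and on $\Lambda L^{-1}$ because it must intertwine the rank-one nilpotent field built from the \emph{fixed} nonzero section $u_D$, is exactly what distinguishes the vector bundle $q^*\Vv_k$ from its projectivisation and makes $\phi_k$ injective on the former. A secondary technical point is the global semistability needed for $\phi_k|_{q^*\Vv_k}$ to be everywhere defined, including over the most unstable members $E_v=L\oplus\Lambda L^{-1}$; this is handled by the classification of $\phi$-invariant line subbundles above. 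Everything else — the construction of the universal Higgs family and its descent to the two projective bundles — is standard.
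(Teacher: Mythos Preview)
Your proof is correct and follows the same line as the paper's, only far more explicitly: the paper disposes of (1) as immediate from the definitions of $\Vv_k$ and $\mathcal{L}$, handles (2) by the same scaling observation you make (that replacing $i$ by $i/\lambda$ or $\pi$ by $\pi/\lambda$ changes the extension class to $\lambda v$, so $(L,\lambda v,\lambda u)$ gives the same Higgs bundle), and then simply declares (3) and (4) ``straightforward''. Your additions --- the verification that $(E_v,\phi)$ is a \emph{stable} Higgs bundle for every $v$ via the classification of $\phi$-invariant line subbundles, and the explicit reconstruction $(E,\phi)\mapsto(L,D,v)$ using $\ker\phi$, the cokernel of $\mathrm{im}\,\phi\hookrightarrow(\ker\phi)\otimes K$, and the $\alpha=\beta$ argument --- are exactly the content the paper leaves to the reader, and they are carried out correctly.
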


\begin{proof}
Part (1) follows immediately from the previous description of $\Vv_k$ and $\mathcal{L}$. As for part (2) it will be 
enough to show that the Higgs bundle associated to the triple $(L, \lambda v, \lambda u)$ does not depend on the scalar 
$\lambda \in \CC^*$. But this follows from the observation that the extension class of the exact sequence obtained from
(\ref{extension}) by replacing either $i$ by $\frac{1}{\lambda} i$ or $\pi$ by $\frac{1}{\lambda} \pi$ equals $\lambda v \in
\mathrm{Ext}^1(\Lambda L^{-1}, L)$ for any $\lambda \in \CC^*$. Part (3) and part (4) are straightforward.
\end{proof}

Now we define $\Ee^0_k$ to be the image of the rational map $\phi_k$ and $\Ee_k$ its Zariski closure. 
Then clearly $\Ee_k \subset \Ee$ and 
$\Ww^0_k$ is the image of $\Ee^0_k$ under the forgetful map. 
Clearly $\Ee_k$ is irreducible, except if $\dim Z_k =0$ which 
is equivalent to $\lambda =0$ and $k=g$, and since $\phi_k$ is generically injective
$$
\begin{array}{rcl}
\dim \Ee_k & = & \dim \tilde{Z}_k + \mathrm{rk} \ \Vv_k \\
           & = & (2g -2k-\lambda) + (g + 2k + \lambda -3) \\
           & = & 3g-3. 
\end{array}
$$
We note that the fiber over a general element $E \in \Ww^0_k$ of the forgetful map $\Ee^0_k \rightarrow \Ww^0_k$
is $H^0(X, KL^2\Lambda^{-1})$, where $L$ is a line bundle of degree $-k+1$ contained in $E$. If $2g-2k-\lambda \le g$ and $L$ is a general line bundle of degree $-k+1$ with $H^0(X, KL^2\Lambda^{-1}) \ne 0$, then $h^0(X, KL^2\Lambda^{-1})=1$. Therefore $\text{dim} \Ww^0_k = 3g-3-1= 3g-4$ for $2g-2k-\lambda \le g \Leftrightarrow  \frac{g-\lambda}{2} \le k.$ Thus 
$\mathcal{W}_k$ is an irreducible  divisor in $M_X(2, \Lambda)$ for $k\ge \frac{g-\lambda}{2}$ and $k \not= g$.
 
\begin{proposition}
We have the inclusions
$$\bigcup_{k=1}^{\lc \frac{g-\lambda}{2} \rc -1} \Ww_k \subset 
\Ww_{\lc \frac{g-\lambda}{2}\rc}.$$
\end{proposition}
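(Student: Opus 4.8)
The plan is to prove that every semi-stable bundle $E$ with $[E]\in\Ww_k^0$ and $1\le k\le m-1$, where $m:=\lc\frac{g-\lambda}{2}\rc$, satisfies $[E]\in\Ww_m$; since $\Ww_m$ is closed this gives $\Ww_k=\overline{\Ww_k^0}\subseteq\Ww_m$ for all such $k$, hence the stated inclusion. (There is nothing to prove unless $m\ge 2$, and the case $\lambda=0,\ k=1$, where $\Ww_1$ is the semi-stable boundary, is not excluded.) Fix such an $E$, written as an extension $0\to L\xrightarrow{i}E\xrightarrow{\pi}\Lambda L^{-1}\to 0$ with $\deg L=1-k$. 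The key observation is that for any effective divisor $D$ of degree $l:=m-k\ge 1$ the line bundle $N:=L(-D)$, of degree $1-m$, is already a subsheaf of $E$ (via $N\hookrightarrow L\xrightarrow{i}E$), although its saturation in $E$ is $L$; the idea is to deform $E$ in a one-parameter family so that $N$ becomes a \emph{saturated} subbundle, which forces the nearby bundles into $\Ww_m^0$.

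First I would choose $D$ so that $N\in Z_m$, i.e.\ $H^0(X,KN^2\Lambda^{-1})=H^0(X,KL^2\Lambda^{-1}(-2D))\ne 0$. When $g-\lambda$ is even this is automatic, since $\deg(KN^2\Lambda^{-1})=2g-2m-\lambda=g$, so $Z_m=\text{Pic}^{1-m}(X)$. When $g-\lambda$ is odd one has $\deg(KN^2\Lambda^{-1})=g-1$, and the condition is that $KL^2\Lambda^{-1}(-2D)$ lie on the theta divisor $W_{g-1}(X)\subset\text{Pic}^{g-1}(X)$; as $D$ varies over the symmetric product $S^l(X)$ the line bundles $KL^2\Lambda^{-1}(-2D)$ trace out a positive-dimensional complete subvariety of $\text{Pic}^{g-1}(X)$ (the image of the Abel--Jacobi map composed with a finite morphism, using $l\ge 1$), which therefore meets the ample divisor $W_{g-1}(X)$, and this produces the desired $D$. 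With $D$ so chosen, the exact sequence $0\to L/N\to E/N\to E/L\to 0$ reads $0\to\mathcal{O}_D\to E/N\to\Lambda L^{-1}\to 0$, and since $\mathrm{Ext}^1(\Lambda L^{-1},\mathcal{O}_D)=0$ we obtain $E/N\cong\mathcal{O}_D\oplus\Lambda L^{-1}$.

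Next I would produce the family. Over a smooth pointed curve $(S,0)$ with local parameter $t$, one constructs a flat family $\mathscr{Q}$ of coherent sheaves on $X\times S$ with $\mathscr{Q}_0\cong\mathcal{O}_D\oplus\Lambda L^{-1}$ and $\mathscr{Q}_t\cong\Lambda N^{-1}=\Lambda L^{-1}(D)$ for $t\ne 0$ --- for instance as the kernel of the composite surjection $\Lambda N^{-1}\boxtimes\mathcal{O}_S\to(\Lambda N^{-1}|_D)\boxtimes\mathcal{O}_{\{0\}}$ (restriction to $D$ followed by restriction to $t=0$); being a subsheaf of a locally free sheaf on the regular surface $X\times S$ it is automatically $S$-flat, and a $\mathrm{Tor}$-computation, again using $\mathrm{Ext}^1(\Lambda L^{-1},\mathcal{O}_D)=0$, identifies its special fibre. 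Viewing the class of the extension $0\to N\to E\to E/N\to 0$ as an element of $\mathrm{Ext}^1_X(\mathscr{Q}_0,N)$, one lifts it to a class in $\mathrm{Ext}^1_{X\times S}(\mathscr{Q},N\boxtimes\mathcal{O}_S)$: this is possible because $N\boxtimes\mathcal{O}_S$ is locally free and $\mathscr{Q}$, being torsion-free on the regular surface $X\times S$, has homological dimension $\le 1$, so that $\mathrm{Ext}^2_{X\times S}(\mathscr{Q},N\boxtimes\mathcal{O}_S)=0$ and, $\mathscr{Q}$ being $S$-flat, the resulting restriction map onto $\mathrm{Ext}^1_X(\mathscr{Q}_0,N)$ is surjective. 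The associated extension $0\to N\boxtimes\mathcal{O}_S\to\mathscr{E}\to\mathscr{Q}\to 0$ is then $S$-flat with $\mathscr{E}_0\cong E$, and for $t\ne 0$ its fibre sits in $0\to N\to\mathscr{E}_t\to\Lambda N^{-1}\to 0$, hence is a vector bundle of determinant $\Lambda$ containing $N\in Z_m$ as a saturated subbundle of degree $1-m$.

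Finally, since semi-stability is an open condition in $S$-flat families of torsion-free sheaves and $\mathscr{E}_0=E$ is semi-stable, $\mathscr{E}_t$ is semi-stable for $0<|t|\ll 1$; such an $\mathscr{E}_t$ is semi-stable and admits a non-zero nilpotent Higgs field, built from its subbundle $N\in Z_m$ as in the exact sequence \eqref{extension}, so $[\mathscr{E}_t]\in\Ww_m^0$. The family induces a morphism $(S,0)\to\M_X(2,\Lambda)$ sending $0$ to $[E]$, whence $[E]\in\overline{\Ww_m^0}=\Ww_m$. Carrying this out for $k=1,\dots,m-1$ proves the Proposition. I expect the real work to be the construction and verification in the third paragraph --- exhibiting the degeneration $\mathscr{Q}$ of the line bundle $\Lambda N^{-1}$ to the torsion-bearing sheaf $\mathcal{O}_D\oplus\Lambda L^{-1}$, and checking that the lifted extension class genuinely yields an $S$-flat family of vector bundles with special fibre $E$; the Brill--Noether choice of $D$ and the openness of semi-stability are routine.
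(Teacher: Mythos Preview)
Your argument is correct and takes a genuinely different route from the paper's. The paper fixes $L\in Z_{k_0}$ (with $k_0=m$), applies Bertram's theorem resolving the rational map $\psi_L:\mathbb{P}(H^1(L^2\Lambda^{-1}))\dashrightarrow\M_X(2,\Lambda)$ by successive blow-ups along secant varieties, and identifies the exceptional fibre over a point of the $d$-th secant variety with $\tilde{\mathbb{P}}_{L(D)}$; since the image of the resolved morphism lies in the closure of the image of $\psi_L$, this exhibits the image of $\psi_{L(D)}$ inside $\Ww_m$. Surjectivity of $Z_m\times S^d(X)\to Z_{m-d}$, $(L,D)\mapsto L(D)$, then gives $\Ww_{m-d}^0\subset\Ww_m$. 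Your approach goes in the opposite direction: starting from a bundle $E\in\Ww_k^0$, you explicitly build a one-parameter family specialising to $E$ whose general member lies in $\Ww_m^0$, via an elementary modification that smooths the torsion of $E/N$. Both ultimately hinge on the same surjectivity (your Brill--Noether choice of $D$ amounts to surjectivity of $Z_m\times S^l(X)\to\mathrm{Pic}^{1-k}(X)$), but the paper invokes Bertram's theorem as a black box, whereas your construction is self-contained and more elementary.

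One imprecision worth tightening: the vanishing $\mathrm{Ext}^2_{X\times S}(\mathscr{Q},N\boxtimes\mathcal{O}_S)=0$ does not follow from local projective dimension $\le 1$ alone (that only kills $\mathcal{E}xt^2$). You should take $S$ affine, e.g.\ $S=\mathbb{A}^1$; then the local-to-global spectral sequence has $E_2^{0,2}=0$ (by $\mathrm{pd}\,\mathscr{Q}\le 1$), $E_2^{1,1}=H^1(\mathcal{E}xt^1(\mathscr{Q},N\boxtimes\mathcal{O}_S))=0$ (the $\mathcal{E}xt^1$ is supported on the finite set $D\times\{0\}$), and $E_2^{2,0}=H^2(\mathcal{H}om(\mathscr{Q},N\boxtimes\mathcal{O}_S))=0$ (since $S$ affine forces $H^2$ on $X\times S$ to vanish, the fibres over $S$ being curves). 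With that in place, the long exact sequence for $0\to N\boxtimes\mathcal{O}_S\xrightarrow{t}N\boxtimes\mathcal{O}_S\to N\boxtimes\mathcal{O}_{\{0\}}\to 0$ together with $S$-flatness of $\mathscr{Q}$ gives the surjectivity of the restriction $\mathrm{Ext}^1_{X\times S}(\mathscr{Q},N\boxtimes\mathcal{O}_S)\to\mathrm{Ext}^1_X(\mathscr{Q}_0,N)$ exactly as you assert.
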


\begin{proof}
We put $k_0 =  \lc \frac{g-\lambda}{2} \rc$ and consider the rational map  
\[
\psi_{k_0} : \mathbb{P}(\Vv_{k_0}) \dashrightarrow \M_X(2, \Lambda)
\]
introduced in Proposition \ref{nilpcone} (3). Let $L \in Z_{k_0}$.  The restriction of 
$\psi_{k_0}$ on the fiber of $\mathbb{P}(\Vv_{k_0})_L$ over $L$ is not defined at the points where the associated bundles are not 
semi-stable. Let
\[
\psi_L: \mathbb{P}_L := \mathbb{P}(H^1(X, L^2\Lambda^{-1})) \dashrightarrow \M_X(2, \Lambda)
\]
be the restriction of $\psi_{k_0}$ at the fiber over $L$.
Then by \cite{B} Theorem 1 there is a natural sequence $\sigma$ of blow-ups along smooth centers resolving $\psi_L$  
into a morphism $\tilde{\psi}_L: \tilde{\mathbb{P}}_L \to M_X(2, \Lambda)$.  The image of $\tilde{\psi}_L$ is contained in closure
of the image of the rational map $\psi_L$. Here $\sigma$ is the blow-up morphism $\sigma: \tilde{\mathbb{P}}_L \to \mathbb{P}_L$. 

Now $X$ is embedded in $\mathbb{P}_L$ via the natural map. Let $x \in X$ and 
 $E$ be the bundle associated to $x$. Then $E$ fits into the exact sequence (\ref{extension}) and 
by \cite{B} Observation (2) page 451 the bundle $E$ is not semi-stable. 
Furthermore by \cite{B} Theorem 1 (2) there is a natural isomorphism $\sigma^{-1}(x) \cong 
\tilde{\mathbb{P}}_{L(x)}$ and, when restricted to $\sigma^{-1}(x)$, the morphism 
$\tilde{\psi}_L$ coincides with the morphism
\[
\tilde{\psi}_{L(x)} : \tilde{\mathbb{P}}_{L(x)}  \to  \M_X(2, \Lambda).
\]
Now $\tilde{\mathbb{P}}_{L(x)}$ is the blow-up of $\mathbb{P}_{L(x)}$ and the bundles corresponding to extension classes in 
$\mathbb{P}_{L(x)}$ fit in the exact sequence of the form
\[
0 \to L(x) \to V \to L^{-1}(-x) \Lambda \to 0.
\]
Hence we deduce that the image of $\tilde{\psi}_{L(x)}$ is contained in $\Ww_{k_0}$. Next we observe that if 
$L \in Z_{k_0}$, then $L(x) \in Z_{k_0-1}$ for any $x \in X$. Hence we obtain a morphism
\[
\mu : Z_{k_0} \times X \longrightarrow Z_{k_0 -1}, \ \  \mu(L,x) = L(x).
\]
It will be enough to show that $\mu$ is surjective to conclude that $\Ww_{k_0-1}^0 \subset \Ww_{k_0}$, hence $\Ww_{k_0-1} \subset \Ww_{k_0}$ since
$\Ww_{k_0}$ is closed.

If $g- \lambda$ is even, then $Z_{k_0-1} = \mathrm{Pic}^{2-k_0}(X)$ et $Z_{k_0} = \mathrm{Pic}^{1-k_0}(X)$ and $\mu$ is obviously surjective. 
If $g - \lambda$ is odd, then $Z_{k_0-1} = \mathrm{Pic}^{2-k_0}(X)$ and $Z_{k_0}$ is an irreducible divisor in $\mathrm{Pic}^{1-k_0}(X)$. If on the contrary $\mu$ is not surjective, then 
$Z_{k_0}$ would be
invariant by a translation by an line bundle of the form $\mathcal{O}_X(x-y)$ for $x,y \in X$, hence by any translation. This is a contradiction.

\bigskip

More generally let $D$ be a general effective divisor of degree $1 \le d \le k_0 -1$. Then again by \cite{B} Observation 2 any point  $x \in \overline{D}$ corresponds to a 
non-semi-stable bundle, except if $\lambda = 0$ and $d = k_0 -1$ (see below for a discussion of this exceptional case). 
Furthermore if $x$ is general in $\overline{D}$ then by \cite{B} Theorem 1 (2) there is a natural isomorphism  $\sigma^{-1}(x) \cong \tilde{\mathbb{P}}_{L(D)}$ and the restriction of 
$\tilde{\psi}_L$ to $\sigma^{-1}(x)$ coincides with the map
\[
\tilde{\psi}_{L(D)} : \tilde{\mathbb{P}}_{L(D)}  \to M_X(2, \Lambda).
\]
As before, the natural multiplication morphism
$$ \mu : Z_{k_0} \times S^{d}(X) \longrightarrow Z_{k_0 -d}$$
is easily seen to be surjective, which implies that $\Ww_{k_0-d}^0 \subset \Ww_{k_0}$, hence $\Ww_{k_0-d} \subset \Ww_{k_0}$ since
$\Ww_{k_0}$ is closed.

Finally, the case $\lambda = 0$ and $d= k_0 -1$ corresponds to $\Ww_1$, which is the image of the $(k_0 -1)$-th secant variety to $X \subset
\mathbb{P}_L$ under the rational map $\psi_L$, when $L$ varies in $Z_{k_0}$.
\end{proof}

\bigskip

We will need the following lemma in section 4.

\begin{lemma} \label{fiberqzerodim}
With the notation of Proposition \ref{nilpcone} and for $\lc \frac{g-\lambda}{2}\rc \leq k \leq g - \lambda$, a {\em general} line bundle 
$L \in Z_k$ and a {\em general} extension class $v \in (\Vv_k)_L = \mathrm{Ext}^1(\Lambda L^{-1}, L)$ defining a bundle $E_v$ as in 
(\ref{extension}), we have
$$ \dim \mathrm{Hom}(E_v, L^{-1} \Lambda) = 1. $$
\end{lemma}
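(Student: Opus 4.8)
First, I would read off $\mathrm{Hom}(E_v, L^{-1}\Lambda)$ from the extension (\ref{extension}). Applying $\mathrm{Hom}(-, L^{-1}\Lambda)$ to (\ref{extension}) yields the exact sequence
\[
0 \longrightarrow \mathrm{Hom}(L^{-1}\Lambda, L^{-1}\Lambda) \longrightarrow \mathrm{Hom}(E_v, L^{-1}\Lambda) \longrightarrow \mathrm{Hom}(L, L^{-1}\Lambda) \stackrel{\delta_v}{\longrightarrow} \mathrm{Ext}^1(L^{-1}\Lambda, L^{-1}\Lambda).
\]
Here $\mathrm{Hom}(L^{-1}\Lambda, L^{-1}\Lambda) = H^0(X, \mathcal{O}_X) = \CC$, spanned by the class of $\pi$, while $\mathrm{Hom}(L, L^{-1}\Lambda) = H^0(X, L^{-2}\Lambda)$ and $\mathrm{Ext}^1(L^{-1}\Lambda, L^{-1}\Lambda) = H^1(X, \mathcal{O}_X)$; under these identifications $\delta_v$ is cup product with the extension class $v \in \mathrm{Ext}^1(L^{-1}\Lambda, L) = H^1(X, L^2\Lambda^{-1})$ with respect to the multiplication $L^{-2}\Lambda \otimes L^2\Lambda^{-1} \to \mathcal{O}_X$. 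Hence $\dim \mathrm{Hom}(E_v, L^{-1}\Lambda) = 1 + \dim \Ker \delta_v$, and it suffices to prove that $\delta_v$ is injective for a general $L \in Z_k$ and a general $v$.

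Next I would record the relevant dimensions. For a general $L \in Z_k$ one has $h^0(X, KL^2\Lambda^{-1}) = 1$, as observed above (this uses $2g - 2k - \lambda \le g$, which holds because $k \ge \lc \frac{g-\lambda}{2} \rc$). Thus $h^1(X, L^{-2}\Lambda) = 1$ by Serre duality, and Riemann--Roch gives
\[
h^0(X, L^{-2}\Lambda) = 1 + \deg(L^{-2}\Lambda) + 1 - g = 2k + \lambda - g.
\]
If this number is $0$ --- which occurs precisely when $g - \lambda$ is even and $k = \lc \frac{g-\lambda}{2} \rc$ --- then the source of $\delta_v$ vanishes and the statement is immediate. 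So assume $h^0(X, L^{-2}\Lambda) = 2k + \lambda - g \ge 1$, equivalently $2k + \lambda \ge g + 1$.

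Now I would bound the locus of ``bad'' extension classes. For a nonzero $f \in H^0(X, L^{-2}\Lambda)$ with divisor of zeros $D$ of degree $\deg(L^{-2}\Lambda) = 2k + \lambda - 2$, cup product with $f$ is the map $H^1(X, L^2\Lambda^{-1}) \to H^1(X, \mathcal{O}_X)$ in the cohomology sequence of $0 \to L^2\Lambda^{-1} \stackrel{f}{\to} \mathcal{O}_X \to \mathcal{O}_D \to 0$; since $\deg(L^2\Lambda^{-1}) = 2 - 2k - \lambda \le 1 - g < 0$ we have $h^0(X, L^2\Lambda^{-1}) = 0$, so $\Ker(\cdot f)$ is the image of $H^0(\mathcal{O}_D)$, of dimension $\deg D - 1 = 2k + \lambda - 3$, independently of $f$. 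Consider the incidence variety
\[
I = \{\, ([f], v) \ : \ f \cup v = 0 \,\} \ \subset \ \PP H^0(X, L^{-2}\Lambda) \times H^1(X, L^2\Lambda^{-1}),
\]
which is closed and has all fibres over $\PP H^0(X, L^{-2}\Lambda)$ of dimension $2k + \lambda - 3$, so $\dim I = (2k + \lambda - g - 1) + (2k + \lambda - 3)$. Its image $B_L$ under the second projection --- which is proper, the first factor being projective --- is exactly the set of extension classes $v$ for which $\delta_v$ is not injective, it is closed, and
\[
\dim B_L \ \le \ \dim I \ = \ \dim H^1(X, L^2\Lambda^{-1}) - (2g - 2k - \lambda + 1),
\]
where $\dim H^1(X, L^2\Lambda^{-1}) = g + 2k + \lambda - 3 = \mathrm{rk}\, \Vv_k$. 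As $k \le g - \lambda$ gives $2g - 2k - \lambda + 1 \ge 1$, the set $B_L$ is a proper closed subset of $(\Vv_k)_L$. Hence a general $v \in (\Vv_k)_L$ --- which, shrinking further, may also be taken so that $E_v$ is stable as in the discussion before Proposition \ref{nilpcone} --- lies outside $B_L$, so $\delta_v$ is injective and $\dim \mathrm{Hom}(E_v, L^{-1}\Lambda) = 1$.

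The step I expect to be the main obstacle is the dimension estimate $\dim B_L < \dim H^1(X, L^2\Lambda^{-1})$: it is this inequality that forces both the hypothesis $k \le g - \lambda$ (without which $B_L$ could fill the whole fibre) and the genericity of $L$, needed to have $h^0(X, KL^2\Lambda^{-1}) = 1$ and hence $h^0(X, L^{-2}\Lambda)$ as small as possible --- for a special $L$ with $h^0(X, KL^2\Lambda^{-1}) \ge 2$ the count degrades and the conclusion fails. The one routine point along the way is the constancy of $\dim \Ker(\cdot f)$ in $f$, which is automatic from the displayed cohomology sequence since $h^0(\mathcal{O}_D)$ depends only on $\deg D$.
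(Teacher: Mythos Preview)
Your proof is correct and follows essentially the same route as the paper: apply $\mathrm{Hom}(-,L^{-1}\Lambda)$ to the extension, reduce to injectivity of the cup-product map $\delta_v$, and bound the ``bad'' locus of extension classes by a dimension count over $\PP H^0(L^{-2}\Lambda)$. The only cosmetic difference is that the paper phrases the fibre dimension via the secant/linear-span description $\langle D\rangle\subset |KL^{-2}\Lambda|^*$ (invoking the well-known equivalence $s\cup v=0\Leftrightarrow v\in\langle D\rangle$), whereas you compute $\dim\Ker(\cdot f)=\deg D-1$ directly from the long exact sequence of $0\to L^2\Lambda^{-1}\stackrel{f}{\to}\mathcal{O}_X\to\mathcal{O}_D\to 0$; the resulting numerics $(2k+\lambda-g-1)+(2k+\lambda-3)<g+2k+\lambda-3$ are identical (after projectivising) to the paper's.
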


\begin{proof}
Since $L$ is general in $Z_k$ we have $h^0(KL^2 \Lambda^{-1}) = 1$, or equivalently by Riemann-Roch and Serre duality
$h^1(KL^2\Lambda^{-1}) = h^0(L^{-2} \Lambda) = \lambda + 2k -g$. Applying the functor $\mathrm{Hom}( - , L^{-1} \Lambda)$
to the short exact sequence  (\ref{extension}) we see that $\dim \mathrm{Hom}(E_v, L^{-1} \Lambda) = 1$ if and only if the 
coboundary map given by the cup product $\cup v$  with the extension class $v$
\[
\cup v : H^0(L^{-2} \Lambda) = \mathrm{Hom}(L, L^{-1} \Lambda) \longrightarrow H^1 (\mathcal{O}_X) = \mathrm{Ext}^1(L^{-1} \Lambda, L^{-1} \Lambda)
\]
is injective. Given a non-zero section $s \in H^0(L^2 \Lambda)$ it is well-known that $s \cup v = 0$ if and only if 
the extension class $v \in \mathrm{Ext}^1( \Lambda L^{-1}, L) = H^1(L^2 \Lambda^{-1}) = H^0(K L^{-2} \Lambda)^*$
lies in the linear span $\langle D \rangle \subset |K L^{-2} \Lambda|^*$, where $D$ is the zero divisor of $s$. But
$\dim \langle D \rangle  = 2k - 4 + \lambda$, so 
\[
\dim \bigcup_{D \in |L^{-2} \Lambda|} \langle D \rangle \leq (\lambda + 2k -g-1) + (2k -4 + \lambda)= 4k -g+2 \lambda -5,   
\]
which is $< \dim \mathbb{P} (\Vv_k)_L = g + 2k + \lambda - 4$. So for a general extension class $v$ we see that $s \cup v \not= 0$
for any non-zero $s \in H^0(L^{-2} \Lambda)$, which is equivalent to  $\dim \mathrm{Hom}(E_v, L^{-1} \Lambda) = 1$.
\end{proof}

\bigskip

\section{Proof of Theorem 1.2}

We only consider the case when $\lambda = 1$ and $g$ is even, i.e., $g=2a$ for some integer $a$. 
The proof in the other case can be carried out similarly. If $g=2a$, then $\lc \frac{g-1}{2} \rc = a$. 
In this situation, a general rank-$2$ vector bundle of degree $1$ has a line subbundle of maximal degree $1-a$ (see e.g.\cite{O}, \cite{LN}). 

\bigskip
{\em Proof of (1):}
Let $E$ be a very stable rank-$2$ vector bundle of degree $1$. Suppose on the contrary 
that $\text{dim} \ M(E) > 0$ or $M(E)$ is non-reduced. Let $L_0 \in M(E)$.  If $L_0 \to E$ is not saturated, then we have a sequence of maps
\[
L_0 \to L \to E \to L^{-1}\Lambda \to L_0^{-1}\Lambda, 
\]
where $\text{deg} L \ge \text{deg} L_0 +1 = 2-a$. Then $\chi(L^{-2}\Lambda) \le -2$. Therefore 
$h^1(X, L^{-2} \Lambda) = h^0(X, KL^2\Lambda^{-1}) >0$, which implies that $E$ contains a line subbundle $L$ with 
$h^0(X, KL^2\Lambda^{-1}) \ne 0$. Then by \cite{P} Lemma 3.1 the bundle $E$ is not very stable, a contradiction.

On the other hand, if $L_0 \to E$ is saturated, then $E$ fits in the exact sequence
\[
0 \to L_0 \to E \to L_0^{-1}\Lambda \to 0.
\]
Note that the Zariski tangent space at $L_0$ is given by $\text{Hom}(L_0, L_0^{-1}\Lambda)$.  Therefore if $\text{dim} \ M(E) \ge 1$ or 
if $L_0$ is a non-reduced point in $M(E)$, then $\dim \text{Hom}(L_0, L_0^{-1}\Lambda) >0$.
But $\chi(L_0^{-2}\Lambda) = 0$. Thus if $\dim \mathrm{Hom}(L_0, L_0^{-1}\Lambda) >0$, then $h^1(X, L_0^{-2} \Lambda) = h^0(X, KL^2\Lambda) >0$. 
Therefore $E$ is not very stable, a contradiction.

\bigskip
{\em Proof of (2):}
Let $E \in \mathcal{W}^0_a$. Then $E$ contains a line subbundle $L$ of degree $1-a$ such that $h^0(X,KL^2\Lambda^{-1}) \ne 0$ and we 
have the exact sequence (\ref{extension}). Since $\chi(L^{-2}\Lambda) = 0$, we obtain 
that $h^0(X, L^{-2}\Lambda) = h^0(X, KL^2\Lambda^{-1}) >0$. Therefore the dimension of the tangent space at $L$ to the 
Quot-scheme $M(E)$ is $h^0(X, L^{-2}\Lambda) > 0$. Therefore $M(E)$ is degenerate at $L$. Finally, since being degenerate is a closed condition, $M(E)$ is degenerate for
any $E \in \Ww_a$.

\bigskip

\begin{remark}
The statement in Theorem 1.2 (2) is not valid for the points in the other components $\mathcal{W}_k$ for $k >\lc \frac{g-\lambda}{2} \rc$.

\bigskip

{\bf{Example}:}  Take $\lambda = 0$ and $g = 2a+1=3$, i.e.,  $a =1$. For simplicity we assume that the curve $X$ is non-hyperelliptic. 
Then the wobbly locus has two components, 
$\mathcal{W}_2 \text{ and } \mathcal{W}_3$, where $\mathcal{W}_2$ is an irreducible divisor, but $\mathcal{W}_3$ is 
a union of $64$ hyperplane sections.  The $64$ hyperplane sections are indexed by the $64$ theta-characteristics $\theta$,
i.e. line bundles satisfying $\theta^2= K$. We claim that a general extension class 
$\xi \in \mathbb{P}(H^1(X, \theta^{-2})) \dashrightarrow \mathcal{W}_3 \subset M_X(2, \mathcal{O})$ is such that $M(E_\xi)$ is non-degenerate. 
Note that $\mathbb{P}(H^1(X, \theta^{-2})) = \mathbb{P}^5= \mathbb{P}({H^0(X, K^2)}^*)= {|K^2|}^*$ contains the following secant varieties to the curve 
$X \hookrightarrow {|K^2|}^*$
\[
X= \text{Sec}^1(X) \hookrightarrow {|K^2|}^*, \ \ \text{Sec}^2(X) \hookrightarrow {|K^2|}^*,
\]
which are of dimension 1 and 3 respectively, and we have an equality (see e.g. \cite{Lan}) 
\begin{equation}\label{A1}
\text{Sec}^3(X) = {|K^2|}^*.
\end{equation}
It is well-known, see e.g. \cite{LN} Proposition 1.1, that an extension class $\xi \in {|K^2|}^*$ lies on $\text{Sec}^i(X)$ for $1 \le i \le 3$ if and only if the 
associated vector bundle $E_{\xi}$ contains a line subbundle of degree $2-i$. Let $\xi \in \text{Sec}^3(X) \setminus \text{Sec}^2(X)$. Then 
the maximal degree of line subbundles of $E_{\xi}$ is $-1$. Let $D= x_1 + x_2 + x_3$ be a general effective divisor on $X$ of degree $3$ such that 
$\xi \in \langle D \rangle$, the linear span of three points $x_1, x_2, x_3$ of $D$ in ${|K^2|}^*$. 
Then $E_{\xi}$ contains the line subbundle $\theta(-D)$. Note that $E_{\xi}$ is an extension 
\[
0 \to \theta^{-1} \to E_{\xi} \to \theta \to 0.
\]
We also note that $L =\theta(-D)$ is a reduced point of the Quot-scheme $M(E_{\xi})$ if and only if 
$\text{Hom}(L, {E_{\xi}}/L) = \text{Hom}(L, L^{-1}) = H^0(X, L^{-2}) = \{ 0  \}$.

Now consider the map
\[
\Phi: S^{3}(X) \to \text{Pic}^2(X)
\]
which takes a point $(x_1, x_2, x_3)$ to $\mathcal{O}(2x_1 + 2x_2 +2x_3) \otimes \theta^{-2}$.
Clearly the map $\Phi$ is surjective. Let $Z$ denote the divisor $\Phi^{-1}(\Theta)$, where $\Theta$ 
denotes the theta divisor in $\text{Pic}^2(X)$. Then $D \in Z$ if and only if 
$h^0(X, \mathcal{O}(2D) \otimes \theta^{-2}) \ne 0$. We denote by $\tilde{Z}$ the span of all
projective planes $\langle D \rangle \subset {|K^2|}^*$ when $D$ varies in $Z$.
Then $\tilde{Z}$ is a divisor in ${|K^2|}^*$ and we have $\xi \in \tilde{Z}$ if and only if 
$E_{\xi}$ contains a line subbundle $L$ such that $H^0(X, L^{-2}) \ne \{ 0 \}$.  
So for general $\xi \notin \tilde{Z}$ the set $M(E_{\xi})$ is reduced and consists of $8$ line subbundles.

\bigskip

A similar computation can be done for $\lambda =1$ and $g=4$ to show that the statement in Theorem 1.2 (2) is 
not valid for the points in the component  $\mathcal{W}_3$.

\end{remark}

\section{Proof of Theorem 1.3}

In this section we compute the class $cl(\Ww_k)$ of the wobbly divisor $\Ww_k$ in the case $\lambda = 1$ 
for $\lc \frac{g-1}{2} \rc \leq k \leq g-1$ following closely the method used in \cite{F} Section 5 Example 1. Note
that in \cite{F} the case $\lambda = 0$ is worked out.

\bigskip

Let $S$ be a smooth connected variety and let $\Ee$ be a rank-$2$ vector bundle over $S \times X$ such that $\det \Ee =
\pi_X^* (\Lambda)$, where $\pi_S$ and $\pi_X$ denote the projections onto $S$ and $X$ respectively, and such that 
$\Ee_s := \Ee_{|\{ s\} \times X}$ is stable for any $s \in S$. Then the family $\Ee$ determines a classifying map
$$ f : S \longrightarrow \M_X(2, \Lambda).$$

Our first task is to compute the first Chern class of the pull-back under $f$ of the ample generator
$\Dd$ of the Picard group of $\M_X(2, \Lambda)$, i.e.,
$$ \Theta_S := c_1( f^* \Dd) \in H^2(S),$$
in terms of Chern classes of $\Ee$. We recall \cite{DN} Th\'eor\`eme B that the line bundle $f^* \Dd$ is defined as the inverse
of the determinant line bundle
$$ \det R\pi_{S*}(\Ee \otimes \pi_X^* H),$$
where $H$ is a rank-$2$ vector bundle of degree $2g-3$. Note that the condition on the degree 
is equivalent to $\chi (\Ee_s \otimes H) = 0$. Then the Grothendieck-Riemann-Roch theorem gives the equalities
$$
\begin{array}{rcl}
\Theta_S & = & -c_1 ( \det R\pi_{S*}(\Ee \otimes \pi_X^* H)) \\
         & = & -\frac{1}{2} \pi_{S*} \left[ c_1 (\Ee \otimes \pi_X^* H)^2 -2c_2(\Ee \otimes \pi_X^* H) -c_1(\Ee \otimes \pi_X^* H)\cdot \pi_X^* (c_1(K))  \right] \\
         & = & \pi_{S*} c_2(\Ee \otimes \pi_X^* H) \\
         & = & 2\pi_{S*} c_2(\Ee) \in H^2 (S).
\end{array}
$$
Since we need to compute the class in $H^2(S)$ of the $k$-th wobbly divisor $f^{-1}(\Ww_k) \subset S$ in terms of $\Theta_S$, it will be enough 
to do the computations modulo classes in $H^i(S)$ for $i \geq 3$. Hence the above relation allows to write 
$$ c_2(\Ee) = \frac{1}{2} \Theta_S \otimes \eta  \in H^4(S \times X),$$
where $\eta \in H^2(X)$ denotes the class of a point in $X$ --- note that we omit classes in $H^4(S) \otimes H^0(X)$ and $H^3(S) \otimes H^1(X)$.
Since $c_1(\Ee) = 1 \otimes \eta  \in H^0(S) \otimes H^2 (X) \subset H^2(S \times X)$ we get the following expression for the Chern character of $\Ee$
$$ ch(\Ee) = 2 + 1 \otimes \eta - \frac{1}{2} \Theta_S \otimes \eta  + h.o.t., $$
where all h.o.t. are contained in $\oplus_{i \geq 3} H^i(S) \otimes H^*(X)$. 

\bigskip
We also need to recall some standard facts on the first Chern class of a Poincar\'e bundle $\Ll$ over 
$P \times X$, with $P := \mathrm{Pic}^{1-k}(X)$ 
(see e.g. \cite{ACGH} page 335). We have
$$c_1(\Ll) = (1-k) 1 \otimes \eta + \gamma \in H^2(P \times X),$$
where $\gamma$ denotes a class in $H^1(P) \otimes H^1(X)$ --- for a more precise description of $\gamma$ see \cite{ACGH} --- with the
property 
$$\gamma^2 = -2 \Theta_P \otimes \eta.$$
Here $\Theta_P \in H^2(P)$ denotes the class of a theta divisor in $P$.
The rest of the computations goes exactly as in the case $\lambda = 0$. For the convenience of the reader we include the details.

\bigskip

The main idea is to realize the $k$-th wobbly divisor
$$ f^{-1}(\Ww_k) = \pi_S (\Delta_k \cap (S \times Z_k)) \subset S$$
as the projection onto $S$ of the intersection of $S \times Z_k$ with the determinantal
subvariety $\Delta_k \subset S \times P$ defined by 
\[
\Delta_k = \{(s,L) \in S \times P \ | \ \mathrm{Hom}(\Ee_s, L^{-1} \Lambda ) \not= 0 \},
\]
and which is constructed by the standard technique as follows. We fix a reduced divisor $D_0$ of degree 
$d_0$ sufficiently large such that $h^1(X, Hom(\Ee_s, L^{-1} \Lambda (D_0))) = 0$ 
for all $s \in S$ and $L \in P$. We consider the exact sequence over $S \times P \times X$
\[
0 \rightarrow Hom(\Ee, \Ll^{-1} \Lambda) \rightarrow Hom(\Ee, \Ll^{-1} \Lambda(D_0)) \rightarrow 
Hom(\Ee, \Ll^{-1} \Lambda(D_0))_{|D_0} \rightarrow 0.
\]
We introduce the following two vector bundles over $S \times P$
\[
\Ff := (\pi_{S \times P})_* \left( Hom(\Ee, \Ll^{-1} \Lambda(D_0))\right) \ \text{and} \ 
\Aa := \bigoplus_{x \in D_0} Hom(\Ee, \Ll^{-1})_{| S \times P \times \{ x \}} 
\]
of ranks $2(d_0 + k - g) + 1$ and $2d_0$ respectively. Taking the direct image of the above exact sequence under the projection
$\pi_{S \times P}$ onto $S \times P$ we obtain a map $\phi: \Ff \rightarrow \Aa$ over $S \times P$. Let us denote by $q : \mathbb{P}(\Ff) 
\rightarrow S \times P$ the projection from the projectivized bundle $\mathbb{P}(\Ff)$ onto the base variety $S \times P$. Then the 
composition of the tautological section over $\mathbb{P}(\Ff)$ with $q^* \phi$
\[
\mathcal{O}(-1) \rightarrow q^* \Ff \rightarrow q^* \Aa
\]
defines a global section $s \in H^0 (\mathbb{P}(\Ff), q^* \Aa \otimes \mathcal{O}(1))$ whose zero set
 equals
\[
\tilde{\Delta}_k = \{ (s,L, \overline{\varphi}) \ | \ (s,L) \in \Delta_k \ \text{and} \ \overline{\varphi} 
\in \mathbb{P}(\mathrm{Hom}(\Ee_s, L^{-1} \Lambda)) \}.
\]
By Lemma \ref{fiberqzerodim} the map $\tilde{\Delta}_k \cap q^{-1} (S \times Z_k) \rightarrow \Delta_k \cap (S \times Z_k)$ induced by the projection $q$
is birational, which implies that 
$$\dim \tilde{\Delta}_k \cap q^{-1} (S \times Z_k) = \dim \Delta_k \cap (S \times Z_k) = \dim S - 1.$$ 
Hence 
$$\dim \tilde{\Delta}_k  \leq \dim S -1 + \mathrm{codim} \  Z_k = \dim \mathbb{P}(\Ff)  - 2d_0,$$
which shows that $\mathrm{codim} \ \tilde{\Delta}_k  = 2d_0$. Hence we can conclude that its fundamental class is given by the top
Chern class
$$ [\tilde{\Delta}_k] = c_{2d_0}(q^* \Aa \otimes \mathcal{O}(1)).$$
Moreover, by the projection formula we have
$$
\begin{array}{rcl}
[\Delta_k] & = & q_* [\tilde{\Delta}_k] \\
           & = & \sum_{i = 0}^{2d_0} q_*\left( c_i(q^* \Aa) c_1(\mathcal{O}(1))^{2d_0 - i} \right) \\
           & = & \sum_{i = 0}^{2d_0} c_i(\Aa) q_* \left(c_1(\mathcal{O}(1))^{2d_0 - i} \right) \\
           & = & q_* \left(c_1(\mathcal{O}(1))^{2d_0} \right)  \ \text{mod} \ H^i(S) \ i\geq 3.
\end{array}
$$
The last equality follows from the facts that
\[
c_1(Hom(\Ee, \Ll^{-1})_{| S \times P \times \{ x \}}) = 0  \ \text{and} \ c_2(Hom(\Ee, \Ll^{-1})_{| S \times P \times \{ x \}})
\in H^4(S) \otimes H^0(P),
\]
which imply that 
\[
c_1(\Aa) = 0 \ \text{and} \ c_k(\Aa) \in H^{2k}(S) \otimes H^0(P) \ \text{for} \ k \geq 2.
\]
In order to compute the class $q_* \left(c_1(\mathcal{O}(1))^{2d_0} \right)$ we  
compute by the Grothendieck-Riemann-Roch theorem the first terms of the
Chern character of $\Ff$.
$$
\begin{array}{rcl}
ch(\Ff) & = & (\pi_{S \times P})_* ( ch(\Ee^*) \cdot ch(\Ll^{-1} \Lambda(D_0)) \cdot Td(X)  ) \\
        & = & (\pi_{S \times P})_*  ( (2 - 1 \otimes \eta - \frac{1}{2} \Theta_S \otimes \eta + h.o.t) \cdot
        (1 + (d_0 + k) 1 \otimes \eta + \gamma  \\ 
        &   & - \Theta_P \otimes \eta ) \cdot ( 1 + (1 - g) 1 \otimes \eta) ) \\
        & = &  (\pi_{S \times P})_*  ( 2 + (2d_0 + 2k -2 g + 1) 1 \otimes \eta + 2 \gamma - 2 \Theta_P \otimes \eta
        - \frac{1}{2} \Theta_S \otimes \eta \\
        &   & + h.o.t. ) \\
        & = &  r - \frac{1}{2} \Theta_S  - 2 \Theta_P  + h.o.t. ,
\end{array}
$$
where all h.o.t. of the last line are contained in 
$\oplus_{i \geq 3} H^i(S) \otimes H^*(P)$ and 
$r = 2(d_0 + k - g) +1$ denotes the rank of $\Ff$. So we obtain
$$c_1(\Ff) = - (\frac{1}{2} \Theta_S  + 2 \Theta_P)$$
and working modulo the ideal $\oplus_{i \geq 3} H^i(S) \otimes H^*(P)$ we easily 
show the following relations for $k \geq 1$
$$ c_k (\Ff) = \frac{1}{k!} (c_1(\Ff))^k  \ \text{mod} \ H^i(S) \ i\geq 3.$$
Hence we can write the Chern polynomial of $\Ff$ as
$$c_t(\Ff) = 1 + c_1(\Ff) t + \cdots + c_r(\Ff) t^r = 
\exp{(c_1(\Ff)t)}.$$
The class $q_* \left(c_1(\mathcal{O}(1))^{2d_0} \right)$ is by definition (see e.g. \cite{Fu} Chapter 3) the $(2d_0 - r + 1)$-th Segre class of $\Ff$ and is computed as
the coefficient of $t^{2d_0 - r + 1}$ of the inverse of the Chern polynomial
$$ c_t(\Ff)^{-1} = \exp{(-c_1(\Ff)t)},$$
which equals
$$ \frac{(\frac{1}{2} \Theta_S  + 2 \Theta_P)^{e+1}}{(e+1)!},$$
where we put $e = 2d_0 - r = 2g -2k -1 = \dim Z_k$. We only need to compute the component
in $H^2(S) \otimes H^{2e}(P)$ of this class, which equals
$$ \frac{(e+1) \frac{1}{2} \Theta_S \otimes  2^e \Theta_P^e}{(e+1)!} =
\Theta_S \otimes \frac{2^{e-1}}{e!} \Theta_P^e.$$
In order to conclude we will need the following fact.
\begin{lemma}
The fundamental class of $Z_k$ in $P$ equals
$$ [Z_k] = \frac{2^{2(g-e)}}{(g-e)!} \Theta_P^{g-e}. $$
\end{lemma}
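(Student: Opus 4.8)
The plan is to identify $Z_k$ with the pullback of a Brill--Noether locus under an isogeny of the Jacobian and then to combine Poincar\'e's formula with the behaviour of the theta class under that isogeny. Recall that, by construction,
\[
Z_k = \mu_k^{-1}\bigl(W_{2g-2k-\lambda}(X)\bigr),
\]
where $\mu_k \colon \mathrm{Pic}^{1-k}(X) \to \mathrm{Pic}^{2g-2k-\lambda}(X)$ is the morphism $L \mapsto KL^2\Lambda^{-1}$ and, for $0 \le n \le g$, the symbol $W_n(X) \subset \mathrm{Pic}^n(X)$ denotes the image of the Abel--Jacobi map $S^n(X) \to \mathrm{Pic}^n(X)$. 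Throughout write $n = 2g-2k-\lambda = e$, so that $g - e = 2k + \lambda - g \ge 0$ by the hypothesis $k \ge \lc \frac{g-\lambda}{2} \rc$.

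First I would record that $\mu_k$ is finite \'etale of degree $2^{2g}$: after choosing base points identifying both $\mathrm{Pic}^{1-k}(X)$ and $\mathrm{Pic}^{2g-2k-\lambda}(X)$ with $P_0 := \mathrm{Pic}^0(X)$, the map $\mu_k$ becomes the multiplication-by-$2$ isogeny $[2] \colon P_0 \to P_0$ (composed, for other choices of base points, with a translation), which in characteristic zero is \'etale of degree $2^{2g}$ since $\dim P_0 = g$. In particular $\mu_k$ is flat, so $\mu_k^{-1}(W_n(X))$ has pure codimension $\mathrm{codim}\,W_n(X) = g-n$ and, as a cycle,
\[
[Z_k] = \mu_k^*\,[W_n(X)].
\]
Since $\mu_k$ is \'etale and $W_n(X)$ is reduced and irreducible, $Z_k$ carries the reduced structure and there is no multiplicity to worry about. (When $n = g$ one has $W_n(X) = \mathrm{Pic}^n(X)$, $Z_k = \mathrm{Pic}^{1-k}(X)$, and the asserted formula reads $[Z_k] = 1$, so one may assume $n \le g-1$.)

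Next I would invoke Poincar\'e's formula (see e.g. \cite{ACGH}): for $0 \le n \le g$,
\[
[W_n(X)] = \frac{\Theta^{\,g-n}}{(g-n)!} \in H^{2(g-n)}\bigl(\mathrm{Pic}^n(X)\bigr),
\]
where $\Theta$ is the class of a theta divisor. It then remains to compute $\mu_k^*\Theta \in H^2(P)$, with $P = \mathrm{Pic}^{1-k}(X)$. Translations act trivially on $H^2$, and on $H^2(P_0,\mathbb{Z}) \cong \bigwedge^2 H^1(P_0,\mathbb{Z})$ the endomorphism $[2]^*$ acts as multiplication by $4$ (it acts by multiplication by $2$ on $H^1$); hence $\mu_k^*\Theta = 4\,\Theta_P$ in $H^2(P)$. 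Combining these facts,
\[
[Z_k] = \mu_k^*\!\left(\frac{\Theta^{\,g-n}}{(g-n)!}\right) = \frac{(4\,\Theta_P)^{\,g-n}}{(g-n)!} = \frac{2^{2(g-n)}}{(g-n)!}\,\Theta_P^{\,g-n} = \frac{2^{2(g-e)}}{(g-e)!}\,\Theta_P^{\,g-e},
\]
which is the claim.

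The only delicate point is the factor $2^{2(g-e)}$: it arises precisely because $\mu_k$ is, up to translation, the doubling isogeny of the Jacobian, so that $\mu_k^*$ rescales the polarization class $\Theta$ by $4 = 2^2$ rather than by $2$. The remaining ingredients --- Poincar\'e's formula and the fact that a finite flat (here \'etale) morphism pulls back a reduced cycle class to a reduced cycle class of the expected codimension --- are standard, so I do not expect any real obstacle beyond bookkeeping.
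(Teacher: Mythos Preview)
Your proof is correct and follows essentially the same approach as the paper: both identify $Z_k$ as the pullback of the Brill--Noether locus $W_e(X)$ under the map $\mu_k$, invoke Poincar\'e's formula $[W_e(X)] = \Theta^{g-e}/(g-e)!$, and use that the duplication map acts by $2^n$ on $H^n$ (equivalently, $\mu_k^*\Theta = 4\,\Theta_P$). The paper's argument is simply a terse version of yours, omitting the checks on flatness and reducedness that you spell out.
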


\begin{proof}
We recall that the duplication map of an abelian variety $A$ acts as multiplication by $2^n$ on the
cohomology $H^n(A, \mathbb{C})$. We apply this fact to the map $\mu_k$ defining $Z_k$ and we obtain
$$ [Z_k] = \mu_k^*[W_e(X)] = \frac{2^{2(g-e)}}{(g-e)!} \Theta_P^{g-e}, $$
where $W_e(X) \subset \Pic^e(X)$ denotes the Brill-Noether locus of line bundles $L$  with
$h^0(L) > 0$, whose fundamental class equals $\frac{\Theta_P^{g-e}}{(g-e)!}$
by Poincar\'e's formula.
\end{proof}

We now combine the previous results and we obtain
$$ [\Delta_k][Z_k] = \Theta_S \otimes \frac{2^{e-1} 2^{2(g-e)}}{e! (g-e)!} \Theta^g_P = \Theta_S 2^{2g-e-1}\binom{g}{e}
= \Theta_S 2^{2k} \binom{g}{2g-2k -1}, $$
which gives the class $cl(\Ww_k)$ stated in Theorem 1.3.

\bigskip

\begin{remark}
We observe that in \cite{F} page 350 the factor $\binom{g}{e}$ is missing in the formula giving 
``the integral of $\Theta^e_J$ over the preimage of $C^e$".
\end{remark}

\section{Examples}

\subsection{Genus $2$}

\subsubsection{$\lambda = 0$} It is known that $\M_X(2, \mathcal{O}_X)$ is isomorphic to $\mathbb{P}^3$. By Theorem 1.1 the wobbly locus has two components
$$\mathcal{W}= \mathcal{W}_1 \cup \mathcal{W}_2,$$ 
where  $\mathcal{W}_k$ is the closure of the locus $\mathcal{W}_k^0$ for $k =1, 2$.\\ 
Let $k =1$. Note that for any line bundle $L$ of degree zero $h^0(X, KL^2) \ne 0$ and any bundle which contains a line subbundle of degree zero is semi-stable.
Therefore, $\mathcal{W}_1$ is precisely the locus of semi-stable bundles which are not stable. It is known that the strictly semi-stable locus
is a quartic hypersurface (known as Kummer surface) in $\mathbb{P}^3$. Thus the class $\text{cl}(\mathcal{W}_1)$ of the wobbly divisor
$\mathcal{W}_1$ in the Picard group of $\M_X(2, \mathcal{O}_X)$ is $4 \Theta$, where $\Theta$ is the ample generator of the Picard group of $\M_X(2, \mathcal{O}_X)$.

Let $k =2$. Then for a line bundle $L$, $h^0(X, KL^2) \ne 0$ if and only if $L$ is the inverse of a theta characteristic. There are precisely $16$ such line bundles. 
If $L$ is such a line bundle, then any nontrivial extension of $L$ by $L^{-1}$ is stable and for each such line bundle $L$ the space of extensions 
gives a hyperplane in $\mathbb{P}^3$. Therefore $\mathcal{W}_2$ is the union of $16$ hyperplanes in $\mathbb{P}^3$ and the class $\text{cl}(\mathcal{W}_2)$ of the wobbly divisor
$\mathcal{W}_2$ in the Picard group of $\M_X(2, \mathcal{O}_X)$ is $16 \Theta$.

\subsubsection{$\lambda= 1$}  Let $\Lambda$ be a line bundle of degree $1$. It is known that $\M_X(2, \Lambda)$ is isomorphic to a smooth intersection  $Y$ of two 
quadrics in $\mathbb{P}^5.$ By Theorem 1.1 the wobbly locus is irreducible.
If a stable vector bundle $E$ is in the wobbly locus, then under the identification of $\M_X(2, \Lambda)$ with $Y$, it 
corresponds to a point $P \in Y$ such that the intersection of $Y$ with the projectivized 
embedded tangent space of $Y$ at $P$ contains fewer than $4$ lines \cite{P}. 
Classically, it is known that the locus of such points $P \in Y$ is isomorphic to a surface in $\mathbb{P}^5$ of degree $32$. In other words, the irreducible wobbly divisor is isomorphic to a surface in $\mathbb{P}^5$ of degree $32$.
Thus the class $\text{cl}(\mathcal{W}_1)$ of the wobbly divisor
$\mathcal{W}_1$ in the Picard group of $\M_X(2, \Lambda)$ is $8 \Theta$, where $\Theta$ is a hyperplane section (of degree $4$) of $\M_X(2, \Lambda)$.

\subsection{Genus $3, \lambda = 0, k= 2$}  It is known that $\M_X(2, \mathcal{O}_X)$ is isomorphic to Coble's quartic hypersurface in $\mathbb{P}^7$ \cite{NR}. On the other hand, by Theorem 1.3 we have that the class $\text{cl}(\mathcal{W}_2)$ of the wobbly divisor $\mathcal{W}_2$ in the Picard group
 of $\M_X(2, \mathcal{O}_X)$ is $48 \Theta$, where $\Theta$ is a hyperplane section of degree $4$. Therefore, we can describe  $\mathcal{W}_2$ as the cut out of Coble's quartic by a hypersurface of degree $48$.

\subsection{Arbitrary genus $g$, $\lambda = 0$, $k = g$} We recall that $\mathcal{W}_g^0 = \{E: E $ contains a line subbundle $L$ of degree $1-g$  
 with $h^0(KL^2) \ne 0 \}$.  For a line bundle $L$ of degree $1-g$, $h^0(KL^2) \ne 0$ if and only if $L$ is the inverse of a theta characteristic. For each such 
 line bundle $L$ the space of non-trivial extension classes of $L$ by $L^{-1}$ gives a divisor of $\M_X(2, \mathcal{O}_X)$, whose class is
 the ample generator $\Theta$ of the Picard group of $\M_X(2, \mathcal{O}_X)$. Therefore the $2^{2g}$ irreducible divisors of $\mathcal{W}_g$ correspond 
 to the $2^{2g}$ theta characteristics of $X$. Thus the class $\text{cl}(\mathcal{W}_g)$ of the wobbly divisor $\mathcal{W}_g$ in the Picard group
 of $\M_X(2, \mathcal{O}_X)$ is $2^{2g} \Theta$.

\end{document}